\documentclass[a4paper,12pt]{article}
\usepackage{amsthm,
	    amsmath,
	    amssymb,
	    bbm,
	    geometry,
	    epsfig,
	    hyperref,
	    enumerate,
	    comment,
	    nicefrac,
	    listings,
	    latexsym,
	    mathrsfs,
	    mathtools,
	    rotating,
	    float,
	    subeqnarray,
	    cases,
	    indentfirst,
	    graphicx,
	    subfigure,
	    amsfonts,
	    booktabs,
	    bm,
	    epstopdf,
	    outlines,
	    color,
	    todonotes
}

\renewcommand{\lstlistingname}{{\sc Python} code} 

\geometry{a4paper,left=24mm,right=24mm}
\lstset{
  language=Python,
  showspaces=false,
  showstringspaces=false,
  basicstyle=\footnotesize\ttfamily,
  numberstyle=\tiny,
  captionpos=b, 
  abovecaptionskip=\bigskipamount, 
  numbers=left, 
  numbersep=8pt,
  frame=single,
  xleftmargin=.25in,
  xrightmargin=.25in,
  captionpos=b, 
  abovecaptionskip=\bigskipamount, 
  numbers=left, 
  numbersep=8pt,
  frame=single,
  xleftmargin=.25in,
  xrightmargin=.10in,
}

\newtheorem{lemma}{Lemma}[section]
\newtheorem{remark}[lemma]{Remark}

\newtheorem{algo}[lemma]{Framework}

\providecommand{\N}{{\ensuremath{\mathbb{N}}}}

\providecommand{\R}{{\ensuremath{\mathbb{R}}}}
\providecommand{\B}{\mathcal{B}}

\renewcommand{\P}{\mathbbm{P}}
\providecommand{\bS}{\mathbb{S}}
\renewcommand{\S}{\mathcal{S}}
\providecommand{\E}{{\ensuremath{\mathbbm{E}}}}

\providecommand{\N}{{\ensuremath{\mathbbm{N}}}}

\providecommand{\R}{{\ensuremath{\mathbbm{R}}}}

\providecommand{\E}{{\ensuremath{\mathbb{E}}}}

\newcommand{\F}{{\ensuremath{\mathcal{F}}}}
\newcommand{\bF}{{\ensuremath{\mathbb{F}}}}

\newcommand{\cL}{{\ensuremath{\mathcal{L}}}}

\newcommand{\G}{{\ensuremath{\mathbbm{G}}}}
\newcommand{\A}{{\ensuremath{\mathbbm{A}}}}
\newcommand{\U}{{\ensuremath{\mathbbm{U}}}}
\newcommand{\V}{{\ensuremath{\mathbbm{Z}}}}
\newcommand{\X}{{\ensuremath{\mathcal{X}}}}
\newcommand{\Y}{{\ensuremath{\mathcal{Y}}}}
\newcommand{\cZ}{{\ensuremath{\mathcal{Z}}}}

\title{
Machine learning approximation algorithms \\ 
for high-dimensional fully nonlinear partial \\
differential equations and second-order \\ 
backward stochastic differential equations
}

\author{Christian Beck$^1$, 
	Weinan E$^2$,
	and 
	Arnulf Jentzen$^3$
\medskip
\\
\small{$^1$ETH Zurich (Switzerland), 
e-mail: christian.beck (at) math.ethz.ch}
\smallskip
\\
\small{$^2$Beijing Institute of Big Data Research (China), 
Princeton University (USA),}
\\
\small{and Peking University (China), 
e-mail: weinan (at) math.princeton.edu}
\smallskip
\\
\small{$^3$ETH Zurich (Switzerland), 
e-mail: arnulf.jentzen (at) sam.math.ethz.ch}}	
	
\begin{document}

\maketitle
 \begin{abstract}
 High-dimensional partial differential equations (PDE) appear in a 
 number of \mbox{models} from the financial industry, such as in 
 derivative pricing models, credit valuation adjustment (CVA) models, 
 or portfolio optimization models.  
 The PDEs in such applications are high-dimensional as the dimension 
 corresponds to the number of financial assets in a portfolio.
 Moreover, such PDEs are often fully nonlinear 
 due to the need to incorporate certain nonlinear phenomena in the model 
 such as default risks, transaction costs, volatility uncertainty 
 (Knightian uncertainty), or trading constraints in the model. 
 Such high-dimensional fully nonlinear 
 PDEs are exceedingly difficult to solve as the computational effort 
 for standard approximation methods grows exponentially with the 
 dimension. In this work we propose a new method for solving 
 high-dimensional fully nonlinear second-order PDEs. Our method can in 
 particular be used to sample from high-dimensional nonlinear 
 expectations. The method is based on 
 (i) a connection between fully nonlinear second-order PDEs and 
 second-order backward stochastic differential equations (2BSDEs), 
 (ii) a merged formulation of the PDE and the 2BSDE problem, 
 (iii) a temporal forward discretization of the 2BSDE and a spatial 
 approximation via deep neural nets, 
 and (iv) a stochastic gradient descent-type optimization procedure. 
 Numerical results obtained using {\sc TensorFlow} in {\sc Python} 
 illustrate the efficiency and the accuracy of the method in the cases 
 of a $100$-dimensional Black-Scholes-Barenblatt equation, 
 a $100$-dimensional Hamilton-Jacobi-Bellman equation, 
 and a nonlinear expectation of a $ 100 $-dimensional $ G $-Brownian motion.
\end{abstract}

\begin{center}
\emph{Keywords:} 
deep learning, 
second-order backward stochastic differential\\ 
equation,
2BSDE,
numerical methhod, 
Black-Scholes-Barentblatt equation,
\\
Knightian uncertainty,
Hamiltonian-Jacobi-Bellman equation,
\\
HJB equation,
nonlinear expectation,
$ G $-Brownian motion
\end{center}

\tableofcontents

\section{Introduction}
 Partial differential equations (PDE) play an important role in an  
abundance of \mbox{models} from finance to physics. 
Objects as the wave function associated to a quantum physical system,
the value function describing the fair price of a financial 
derivative in a pricing model,
or the value function describing the expected maximal utility in a 
portfolio optimization problem are often given as solutions to 
nonlinear PDEs. 
Roughly speaking, the nonlinearities in the above mentioned PDEs from 
financial engineering applications appear due to certain nonlinear 
effects in the trading portfolio (the trading portfolio 
for hedging the financial derivative claim in the case of derivative 
pricing problems and the trading portfolio whose utility has to be 
maximized in the case of portfolio optimization problems);
see, e.g.,  
  \cite{Bender2015Primal,
	Bergman1995, 
	ElKarouiPengQuenez1997,
	GobetLemorWarin2005, 
	laurent2014overview,
	LemorGobetWarin2006}
for derivative pricing models with different interest rates for 
borrowing and lending,
see, e.g.,
  \cite{Crepeyetal2013, 
	Labordere2012}
for derivative pricing models incorporating the default risk 
of the issuer of the financial derivative,
see, e.g.,  
  \cite{Bayraktar2009Valuation, 
	Bayraktar2008Pricing,
	windcliff2007hedging}
for models for the pricing of financial derivatives 
on untradable underlyings such as financial 
derivatives on the temperature
or mortality-dependent financial derivatives,
see, e.g.,  
  \cite{Amadori2003Nonlinear,ekren2017portfolio,Moreau2017Trading}
for models incorporating that the trading strategy influences 
the price processes though demand and supply 
(so-called large investor effects),
see, e.g.,
 \cite{ForsythVetzal2001,
       GuyonLabordere2011, 
       Leland1985,Possamai2015Homogenization}       
for models taking transaction costs
in the trading portfolio into account,
and see, e.g., 
\cite{AvellanedaLevyParas1995,GuyonLabordere2011}
for models incorporating
uncertainties in the model parameters 
for the underlyings (Knightian uncertainty).
The PDEs emerging from such models are often 
high-dimensional as the associated trading portfolios frequently contain
a whole basket of financial assets
(see, e.g., \cite{Bender2015Primal,Crepeyetal2013,ForsythVetzal2001}).
These high-dimensional nonlinear PDEs are typically exceedingly 
difficult to solve approximatively. Nonetheless, there is a strong 
demand from the financial engineering industry to approximatively 
compute the solutions of such high-dimensional nonlinear parabolic PDEs 
due to the above mentioned practical relevance of these PDEs.

There are a number of numerical methods for solving nonlinear 
parabolic PDEs approximatively in the literature. 
Some of these methods are deterministic approximation methods and others 
are random approximation methods which rely on suitable probabilistic 
representations of the corresponding PDE solutions 
such as probabilistic representations based on 
backward stochastic differential equations (BSDEs) 
(cf., e.g., \cite{Bismut1973,
      Bouchard2015Lecture,
      geiss2014decoupling,
      PardouxPeng1990, 
      PardouxPeng1992, 
      PardouxTang1999, 
      Peng1991}),
probabilistic representations based on 
second-order backward stochastic differential equations (2BSDEs) 
(cf., e.g., \cite{CheriditoSonerTouziVictoir2007}), 
probabilistic representations based on branching diffusions
(cf., e.g., \cite{Labordere2012, 
		  Labordereetal2016arXiv, 
		  LabordereTanTouzi2014,
		  McKean1975,
		  SkorohodBranchingDiffusion1964, 
		  Watanabe1965Branching}),
and 
probabilistic representations based on extensions of the classical 
Feynman-Kac formula 
(cf., e.g., \cite{LinearScaling, 
		  KaratzasShreve2ndEdition,
		  OksendalSDEs}).
We refer, e.g., 
to 
\cite{chiaramontesolving, 
      dehghan2009numerical, 
      lagaris1998artificial, 
      LeeKang1990, 
      ramuhalli2005finite,
      Tadmor2012,
      Thomee1997,
      PetersdorffSchwab2004}
for deterministic approximation methods for PDEs, 
to 
\cite{BallyPages2003,
      BenderDenk2007, 
      Bender2015Primal, 
      BouchardTouzi2004,
      Chassagneux2014, 
      ChassagneuxCrisan2014,
      ChassagneuxRichou2015, 
      ChassagneuxRichou2016,
      CrisanManolarakis2010,
      CrisanManolarakis2012, 
      CrisanManolarakis2014, 
      CrisanManolarakisTouzi2010, 
      DelarueMenozzi2006,
      DouglasMaProtter, 
      GobetLabart2010, 
      GobetLemor2008Numerical, 
      GobetLemorWarin2005,  
      GobetLopezSalasTurkedjiev2016,
      GobetTurkedjiev2016,
      GobetTurkedjiev2016MathComp,  
      HuijskensRuijterOosterlee2016,
      LabartLelong2013,
      LemorGobetWarin2006, 
      LionnetDosReisSzpruch2015, 	      
      MaProtterSanMartin2002, 
      MaProtterYong1994, 
      MaYong1999, 
      MilsteinTretyakov2006, 
      MilsteinTretyakov2007, 
      Pham2015,
      RuijterOosterlee2015,
      RuijterOosterlee2016,
      Ruszczynski2017Dual,
      Turkedjiev2015,
      Zhang2004}
for probabilistic approximation methods for PDEs based on temporal 
discretizations of BSDEs, 
to 
\cite{EHanJentzen2017, 
      EHanJentzen2017overcoming}
for probabilistic approximation methods for PDEs based on suitable deep learning approximations for BSDEs, 
to 
\cite{FuZhaoZhou,
      ZhangGunzburgerZhao2013} 
for probabilistic approximation methods for BSDEs based on sparse grid approximations, 
to 
\cite{BriandLabart2014, 
      GeissLabart2016}
for probabilistic approximation methods for BSDEs based on Wiener Chaos expansions, 
to
\cite{BouchardElieTouzi2009,
      CheriditoSonerTouziVictoir2007,
      FahimTouziWarin2011,
      GuoZhangZhuo2015,
      KongZhaoZhou2015, 
      KongZhaoZhou2017}
for probabilistic approximation methods for PDEs based on temporal 
discretization of 2BSDEs, 
to
\cite{ChangLiuXiong2016,
      Labordere2012,
      Labordereetal2016arXiv,
      LabordereTanTouzi2014,
      RasulovRaimoveMascagni2010,
      warin2017variations}
for probabilistic approximation methods for PDEs based on 
branching diffusion representations,
and 
to \cite{LinearScaling, MultilevelPicard} 
for probabilistic approximation methods based on extensions of the classical Feynman-Kac formula.

Most of the above named approximation methods are, however, only 
applicable in the case where the PDE/BSDE dimension $ d $ is rather 
small or work exclusively in the case of serious restrictions on the 
parameters or the type of the considered PDE (e.g., small nonlinearities, 
small terminal/initial conditions, semilinear structure of the PDE). 
The numerical solution of a high-dimensional nonlinear PDE thus 
remains an exceedingly difficult task
and there is only a limited number of situations where practical 
algorithms for high-dimensional PDEs have been developed (cf., e.g., 
\cite{Darbon2016Algorithmus, 
      EHanJentzen2017, 
      LinearScaling, 
      MultilevelPicard, 
      EHanJentzen2017overcoming, 
      Labordereetal2016arXiv, 
      LabordereTanTouzi2014,
      SirignanoDGM2017}).
In particular, to the best of our knowledge, at the moment 
there exists no practical algorithm for high-dimensional 
fully nonlinear parabolic PDEs in the scientific literature.

In this work we intend to overcome this difficulty, that is, 
we propose a new algorithm for solving fully-nonlinear PDEs
and nonlinear second-order backward stochastic differential 
equations 2BSDEs. Our method in particular can be used to sample 
from Shige Peng's nonlinear expectation in high space-dimensions
(cf., e.g., \cite{Peng2004GExpectations,
		  Peng2005NonlinearExpMarkov,
		  peng2007g, 
		  Peng2010BookNonlinearExp}).
The proposed algorithm 
exploits a connection between PDEs and 2BSDEs 
(cf., e.g., Cheridito et al.~\cite{CheriditoSonerTouziVictoir2007}) 
to obtain a merged formulation of the PDE and the 2BSDE, 
whose solution is then approximated by combining 
a time-discretization with a neural network based 
deep learning procedure 
(cf., e.g., 
\cite{Bengio2009, 
      cai2017approximating, 
      Carleo2017Solving,
      chiaramontesolving, 
      dehghan2009numerical,
      EHanJentzen2017,
      HanE2016Arxiv,
      EHanJentzen2017overcoming,  
      khoo2017solving,
      Krizhevsky2012,
      lagaris1998artificial, 
      Lecun98, 
      LeCun2015, 
      LeeKang1990, 
      Meade1994Numerical,
      mehrkanoon2015learning,
      ramuhalli2005finite, 
      ruder2016overview}).
Roughly speaking, the merged formulation allows us to formulate the
original PDE problem as a learning problem.  
The random loss function for the deep neural network in our algorithm
is, loosely speaking, given by the error between the 
prescribed terminal condition of the 2BSDE 
and the neural network based forward time discretization of the 2BSDE. 
A related deep learning approximation algorithm 
for PDEs of semilinear type based on 
forward BSDEs has been recently proposed in 
\cite{EHanJentzen2017, EHanJentzen2017overcoming}. 
A key difference between 
\cite{EHanJentzen2017, EHanJentzen2017overcoming}
and the present work is that here we rely on the 
connection between fully nonlinear PDEs and 2BSDEs given by 
Cheridito et al.~\cite{CheriditoSonerTouziVictoir2007}
while \cite{EHanJentzen2017, EHanJentzen2017overcoming} rely on 
the nowadays almost classical connection between PDEs and BSDEs 
(cf., e.g., \cite{PardouxPeng1992, PardouxPeng1990, PardouxTang1999, Peng1991}). 
This is the reason why the method 
proposed in \cite{EHanJentzen2017, EHanJentzen2017overcoming} 
is only applicable to semilinear PDEs while the algorithm 
proposed here allows to treat fully nonlinear PDEs and 
nonlinear expectations.

The remainder of this article is organized as follows.
In Section~\ref{sec:algorithm_main_ideas} 
we derive 
(see Subsections~\ref{subsec:algo_fullyNonlinear}--\ref{subsec:algo_sgd} below)
and formulate
(see Subsection~\ref{subsec:specific_case} below)
a special case of the algorithm proposed in this work.  
In Section~\ref{sec:details} 
the proposed algorithm is derived
(see Subsections~\ref{subsec:details_fullyNonlinear}--\ref{subsec:details_deepandsgd} below) 
and formulated
(see Subsection~\ref{subsec:general_case} below)
in the general case. 
The core idea is most apparent  
in the simplified framework 
in Subsection~\ref{subsec:specific_case}
(see Framework~\ref{def:specific_case} below).
The general framework 
in Subsection~\ref{subsec:general_case}, in turn, 
allows for employing more sophisticated machine learning 
techniques (see Framework~\ref{def:general_algorithm} below). 
In Section~\ref{sec:examples} we present numerical results 
for the proposed algorithm in the case of several high-dimensional PDEs. 
In Subsection~\ref{subsec:example_allen_cahn_plain_sgd_no_bn} 
the proposed algorithm in the simplified framework 
in Subsection~\ref{subsec:specific_case} 
is employed to approximatively 
calculate the solution of 
a $ 20 $-dimensional Allen-Cahn equation. 
In Subsections 
	\ref{subsec:example_bsb}, 
	\ref{subsec:example_square_gradient}, 
	\ref{subsec:example_allen_cahn}, 
and 
	\ref{subsec:example_gbm}
the proposed algorithm in the general framework 
in Subsection~\ref{subsec:general_case}
is used to approximatively calculate 
the solution of a $ 100 $-dimensional Black-Scholes-Barenblatt equation,
the solution of a $ 100 $-dimensional Hamilton-Jacobi-Bellman equation, 
the solution of a $ 50 $-dimensional Allen-Cahn equation, 
and nonlinear expectations of $ G $-Brownian motions in $ 1 $ and $ 100 $ 
space-dimensions. 
{\sc Python} implementations of the algorithms are provided 
in Section~\ref{sec:source_code}.

\section{Main ideas of the deep 2BSDE method}
 \label{sec:algorithm_main_ideas}
 In Subsections~\ref{subsec:algo_fullyNonlinear}--\ref{subsec:algo_sgd} below 
we explain the main idea behind the 
algorithm proposed in this work which we refer to as \emph{deep 2BSDE method}. 
This is done at the expense of a 
rather sketchy derivation and description. 
More precise and more general definitions of the deep 2BSDE method may be found in 
Sections~\ref{subsec:specific_case} 
and \ref{subsec:general_case} below. 
In a nutshell, the main ingredients of the deep 2BSDE method are
\begin{enumerate}[(i)]
 \item a certain connection between PDEs and 2BSDEs, 
 \item a merged formulation of the PDE and the 2BSDE problem,
 \item a temporal forward discretization of the 2BSDE and a spatial 
 approximation via deep neural nets, and
 \item a stochastic gradient descent-type optimization procedure.
\end{enumerate}
The derivation of the deep 2BSDE method is
mainly based on ideas in
E, Han, \& Jentzen~\cite{EHanJentzen2017} 
and 
Cheridito et al.~\cite{CheriditoSonerTouziVictoir2007} 
(cf., e.g., 
\cite[Section~2]{EHanJentzen2017}
and
\cite[Theorem~4.10]{CheriditoSonerTouziVictoir2007}).
Let us start now by describing the PDE problems which 
we want to solve with the deep 2BSDE method. 

\subsection{Fully nonlinear second-order PDEs}
\label{subsec:algo_fullyNonlinear}

Let  
  $d\in\N = \{1,2,3,\ldots\}$, 
  $T\in (0,\infty)$,  
  $ u = (u(t,x))_{t\in [0,T], x\in\R^d}\in C^{1,2}([0,T]\times\R^d,\R) $, 
  $ f \in C( [0,T]\times\R^d\times\R\times\R^d\times\R^{d\times d} , \R ) $,  
  $ g \in C( \R^d, \R ) $ 
satisfy for all $t\in [0,T)$, $x\in\R^d$ 
that $u(T,x) = g(x)$ and 
\begin{align}\label{eq:fullyNonlinearPDETerminalValueProblem}
  \tfrac{\partial u}{\partial t}(t,x) 
  & = 
  f\bigl(t,x,u(t,x),(\nabla_x u)(t,x),(\operatorname{Hess}_x u)(t,x)\bigr). 
\end{align}
The deep 2BSDE method allows us to approximatively compute the function $ u(0,x) $, $ x \in \R^d $. 
To fix ideas we restrict ourselves in this section 
to the approximative computation of the 
real number $ u(0,\xi) \in \R $ for some $\xi\in\R^d$ 
and we refer to Subsection~\ref{subsec:general_case} 
below for the general algorithm.
Moreover, the deep 2BSDE method can easily be extended 
to the case of systems of fully nonlinear second-order parabolic 
PDEs but in order to keep the notational complexity as low as 
possible we restrict ourself to the scalar case in this work
(cf.\ \eqref{eq:fullyNonlinearPDETerminalValueProblem} above). 
Note that the PDE~\eqref{eq:fullyNonlinearPDETerminalValueProblem} 
is formulated as a terminal value problem. We chose the 
terminal value problem formulation 
over the in the PDE literature more common initial 
value problem formulation  
because, on the one hand, the terminal value problem formulation 
seems to be more natural in connection with 
second-order BSDEs (which we are going to use below in the 
derivation of the proposed approximation algorithm) and because, on the other hand, 
the terminal value problem formulation 
shows up naturally in financial engineering applications 
like the Black-Scholes-Barenblatt equation 
in derivative pricing
(cf. Section \ref{subsec:example_bsb}).
Clearly, terminal value problems can be transformed into initial 
value problems and vice versa; see, e.g., 
Remark~\ref{rem:initial_vs_terminal_value} below.

\subsection{Connection between fully nonlinear second-order PDEs and 
2BSDEs}\label{subsec:algorithm_connection_pde_bsde}
Let 
  $(\Omega,\F,\P)$ 
  be a probability space,  
let  
  $W\colon [0,T]\times\Omega\to\R^d$ 
  be a standard Brownian motion on 
  $(\Omega,\F,\P)$ with continuous sample paths, 
let 
  $\bF = (\bF_t)_{t\in [0,T]}$ 
  be the normal filtration on $(\Omega,\F,\P)$ 
  generated by $W$, 
and 
let 
  $Y\colon[0,T]\times\Omega\to\R$, 
  $Z\colon[0,T]\times\Omega\to\R^d$,   
  $\Gamma\colon[0,T]\times\Omega\to\R^{d\times d}$, 
  and 
  $A\colon[0,T]\times\Omega\to\R^d$
  be $\bF$-adapted stochastic processes with continuous 
  sample paths which satisfy that for all $t\in [0,T]$ it 
  holds $\P$-a.s.~that 
  \begin{equation}\label{eq:SDEsforYAndZ_everythingPastedIn:Tag1}
  Y_t = 
  g(\xi + W_T) - 
  \int_t^T
    \bigl( 
    f(s,\xi + W_s, Y_s, Z_s, \Gamma_s) 
    + 
    \tfrac12\operatorname{Trace}(\Gamma_s)
    \bigr)\,ds 
  - 
  \int_t^T
    \langle Z_s, dW_s\rangle_{\R^d} 
  \end{equation}
  and
  \begin{equation}\label{eq:SDEsforYAndZ_everythingPastedIn:Tag2}
  Z_t =  Z_0 
  + 
  \int_0^t A_s\,ds 
  + 
  \int_0^t \Gamma_s\,dW_s. 
  \end{equation}
Under suitable smoothness and regularity hypotheses the fully nonlinear PDE 
\eqref{eq:fullyNonlinearPDETerminalValueProblem} is related to 
the 2BSDE system 
\eqref{eq:SDEsforYAndZ_everythingPastedIn:Tag1}--\eqref{eq:SDEsforYAndZ_everythingPastedIn:Tag2} 
in the sense that for all $t\in [0,T]$ it holds $\P$-a.s.~that
\begin{equation}\label{eq:definition_YZGA}
 Y_t = u(t,\xi + W_t)\in\R, \qquad
 Z_t = (\nabla_x u)(t,\xi+W_t)\in\R^d, 
\end{equation}
\begin{equation}\label{eq:DefinitionGamma}
 \Gamma_t = (\operatorname{Hess}_x u)(t,\xi+W_t)\in\R^{d\times d}, \qquad~\text{and}
\end{equation}
\begin{equation}\label{eq:DefinitionA}
 A_t = (\tfrac{\partial }{\partial t}\nabla_x u)(t,\xi+W_t) 
 + \tfrac12 (\nabla_x\Delta_x u)(t,\xi+W_t)\in\R^d
\end{equation}
(cf., e.g., 
Cheridito et al.~\cite{CheriditoSonerTouziVictoir2007} 
and  Lemma~\ref{lem:applying_ito} below).

\subsection{Merged formulation of the PDE and the 2BSDE}
\label{subsec:algo_mergedFormulation}
In this subsection we derive a merged formulation 
(see~\eqref{eq:SDEforYWithLearnablesPluggedIn} 
and~\eqref{eq:SDEforZWithLearnablesPluggedIn}) 
for the 
PDE~\eqref{eq:fullyNonlinearPDETerminalValueProblem} 
and the 
2BSDE system~\eqref{eq:SDEsforYAndZ_everythingPastedIn:Tag1}--\eqref{eq:SDEsforYAndZ_everythingPastedIn:Tag2}. 
More specifically, observe that 
\eqref{eq:SDEsforYAndZ_everythingPastedIn:Tag1} and 
\eqref{eq:SDEsforYAndZ_everythingPastedIn:Tag2} yield that 
for all $\tau_1,\tau_2\in [0,T]$ with $\tau_1\leq \tau_2$ it holds 
$\P$-a.s.~that 
\begin{equation}
 Y_{\tau_2} = Y_{\tau_1} 
 + \int_{\tau_1}^{\tau_2} 
 \bigl( 
  f(s,\xi + W_s,Y_s,Z_s,\Gamma_s) + \tfrac12\operatorname{Trace}(\Gamma_s)
  \bigr)\,ds 
 + 
 \int_{\tau_1}^{\tau_2}
  \langle Z_s, dW_s\rangle_{\R^d} \label{eq:SDEforY_t1t2} 
\end{equation}
and 
\begin{equation}
 Z_{\tau_2}  =  Z_{\tau_1} + 
 \int_{\tau_1}^{\tau_2} A_s\,ds 
 + 
 \int_{\tau_1}^{\tau_2} \Gamma_s\,dW_s. \label{eq:SDEforZ_t1t2}
\end{equation}
Putting \eqref{eq:DefinitionGamma} and \eqref{eq:DefinitionA} 
into \eqref{eq:SDEforY_t1t2} and \eqref{eq:SDEforZ_t1t2}
demonstrates that for all $\tau_1, \tau_2\in [0,T]$ with $\tau_1\leq \tau_2$ 
it holds $\P$-a.s.~that
\begin{equation}\label{eq:SDEforYWithLearnablesPluggedIn}
 \begin{split}
 &Y_{\tau_2} = Y_{\tau_1} 
 + 
 \int_{\tau_1}^{\tau_2} 
  \langle Z_s, dW_s\rangle_{\R^d} \\
 &+ \int_{\tau_1}^{\tau_2} 
 \Bigl( 
  f\bigl(s,\xi + W_s,Y_s,Z_s,(\operatorname{Hess}_x u)(s,\xi+W_s)\bigr) 
  + 
  \tfrac12\operatorname{Trace}\bigl((\operatorname{Hess}_x u)(s,\xi+W_s)\bigr)
  \Bigr)\,ds 
 \end{split}
\end{equation}
and
\begin{equation}\label{eq:SDEforZWithLearnablesPluggedIn}
\begin{split}
 Z_{\tau_2} & =  Z_{\tau_1} 
 + 
 \int_{\tau_1}^{\tau_2} 
 \bigl( ( \tfrac{\partial }{\partial t} \nabla_x u)(s,\xi + W_s) 
 + 
 \tfrac12 (\nabla_x\Delta_x u)(s,\xi + W_s)
 \bigr)
 \,ds
 \\
 &\quad + 
 \int_{\tau_1}^{\tau_2} 
 (\operatorname{Hess}_x u)(s,\xi+W_s)
 \,dW_s
 .
\end{split}
\end{equation}

\subsection{Forward-discretization of the merged PDE-2BSDE system}
\label{subsec:algo_forward_discretization}
In this subsection we derive a forward-discretization of the merged 
PDE-2BSDE system \eqref{eq:SDEforYWithLearnablesPluggedIn}--\eqref{eq:SDEforZWithLearnablesPluggedIn}. 
Let $t_0,t_1,\ldots,t_N \in [0,T]$ be real numbers with  
\begin{equation}
 0=t_0<t_1<\ldots<t_N=T. 
\end{equation}
such that the mesh size 
  $\sup_{0\leq k\leq N} (t_{k+1}-t_k)$
is sufficiently small. Note that 
\eqref{eq:SDEforYWithLearnablesPluggedIn} 
and 
\eqref{eq:SDEforZWithLearnablesPluggedIn}
suggest that for sufficiently large $N\in\N$ it holds 
for all $n\in\{0,1,\ldots,N-1\}$ that
\begin{equation}\label{eq:algorithm_Yapprox}
 \begin{split}
 Y_{t_{n+1}}
 &\approx
 Y_{t_n} 
 + 
 \Bigl(
 f\bigl(t_n,\xi + W_{t_n},Y_{t_n},Z_{t_n},(\operatorname{Hess}_x u)(t_n, \xi+W_{t_n})\bigr)
 \\
 &\quad+ 
 \tfrac12 \operatorname{Trace}\bigl((\operatorname{Hess}_x u)(t_n, \xi+W_{t_n}) \bigr)
 \Bigr) (t_{n+1}-t_n)
 + \langle Z_{t_n} , W_{t_{n+1}}-W_{t_n}\rangle_{\R^d}
\end{split}
 \end{equation}
 \text{and}\quad
\begin{equation}\label{eq:algorithm_Zapprox}
 \begin{split}
 Z_{t_{n+1}} 
 &\approx 
 Z_{t_n} 
 + 
 \bigl(
 (\tfrac{\partial }{\partial t}\nabla_x u)(t_n,\xi + W_{t_n}) 
 +
 \tfrac12 (\nabla_x\Delta_x u)(t_n,\xi + W_{t_n})
 \bigr)
 \,(t_{n+1}-t_n)  \\
 &\quad+
 (\operatorname{Hess}_x u)(t_n, \xi+W_{t_n} )
 \,(W_{t_{n+1}}-W_{t_n}).
\end{split}
\end{equation}

\subsection{Deep learning approximations}
\label{subsec:deep_learning_approximations}
In the next step we employ for every $n\in\{0,1,\ldots,N-1\}$ 
suitable approximations for the functions 
 \begin{equation}
 \R^d \ni x \mapsto
 (\operatorname{Hess}_x u)(t_n, x) \in \R^{d\times d}
 \end{equation}
 and
 \begin{equation}
 \R^d \ni x \mapsto
 (\tfrac{\partial }{\partial t}\nabla_x u)( t_n, x ) 
 +
 \tfrac12 ( \nabla_x\Delta_x u)(t_n,x)
 \in \R^d
 \end{equation}
in \eqref{eq:algorithm_Yapprox}--\eqref{eq:algorithm_Zapprox} 
but not for the functions
$
  \R^d \ni x \mapsto u(t_n,x) \in \R^d
$ 
and 
$ \R^d \ni x \mapsto (\nabla_x u)(t_n,x) \in \R^d
$
in 
\eqref{eq:definition_YZGA}. 
More precisely, 
let 
$ 
  \nu \in\N \cap [d+1,\infty)
$,
for every
$ \theta\in\R^{\nu}$, $n\in\{0,1,\ldots,N\}$
let 
  $\G^{\theta}_n\colon\R^d\to\R^{d\times d}$
  and
  $\A^{\theta}_n\colon\R^d\to\R^d$
be continuous functions, 
and 
for every $\theta = (\theta_1,\theta_2,\ldots,\theta_{\nu})\in\R^{\nu}$ let 
  $\Y^{\theta}\colon\{0,1,\ldots,N\}\times\Omega\to\R$
  and
  $\cZ^{\theta}\colon\{0,1,\ldots,N\}\times\Omega\to\R^d$  
be stochastic processes which satisfy that 
  $\Y^{\theta}_0 = \theta_1$ 
  and 
  $\cZ^{\theta}_0 = (\theta_2, \theta_3, \ldots,\theta_{d+1})$ 
and which satisfy for all $n\in\{0,1,\ldots,N-1\}$ that 
\begin{equation}\label{eq:deepLearningApprox_Y}
 \begin{split}
  &\Y^{\theta}_{n+1} = \Y^{\theta}_n 
  +
  \langle\cZ^{\theta}_n, W_{t_{n+1}}-W_{t_n}\rangle_{\R^d} \\
  &
  + \Bigl( 
  f\bigl(t_n,\xi+W_{t_n},\Y^{\theta}_n,\cZ^{\theta}_n,\G^{\theta}_n(\xi + W_{t_n})\bigr) 
  + \tfrac12 \operatorname{Trace}\bigl(\G^{\theta}_n(\xi + W_{t_n})\bigr)
  \Bigr)\,(t_{n+1}-t_n)  
\end{split}
\end{equation}
and
\begin{equation} \label{eq:deepLearningApprox_Z}
  \cZ^{\theta}_{n+1} = \cZ^{\theta}_n 
  + \A^{\theta}_n(\xi + W_{t_n})\,(t_{n+1}-t_n) 
  + \G^{\theta}_n(\xi + W_{t_n})\,(W_{t_{n+1}}-W_{t_n})
  .  
\end{equation}
For all suitable $ \theta\in\R^{\nu} $ and all
$n\in\{0,1,\ldots,N\}$ we think of 
$\Y^{\theta}_n\colon\Omega\to\R$ 
as an appropriate approximation 
\begin{equation}\label{eq:appropriateApproximationForY}
  \Y^{ \theta }_n \approx Y_{t_n}
\end{equation}
of 
$
  Y_{t_n}\colon \Omega\to\R
$, 
for all suitable $\theta\in\R^{\nu}$ and 
all $n\in\{0,1,\ldots,N\}$
we think of 
$\cZ^{\theta}_n\colon\Omega\to\R^d
$ 
as an appropriate approximation 
\begin{equation}
  \cZ^{ \theta }_n \approx Z_{ t_n }
\end{equation}
of 
$
  Z_{t_n}\colon \Omega\to\R^d
$,
for all suitable $\theta\in\R^{\nu}$, 
$ x \in \R^d $ 
and all
$ n \in\{0,1,\ldots,N-1\} $ 
we think of 
$
  \G^{\theta}_n(x)
  \in\R^{d\times d}
$ 
as an appropriate approximation 
\begin{equation}
  \G^{\theta}_n(x)
  \approx
  (\operatorname{Hess}_x u)(t_n,x)
\end{equation}
of 
$
  (\operatorname{Hess}_x u)(t_n,x)
  \in \R^{d\times d},
$
and 
for all suitable
  $\theta\in\R^{\nu}$, 
  $x\in\R^d$
  and all
  $ n \in \{0,1,\ldots, N - 1 \} $ 
we think of 
$
  \A^{\theta}_n(x)
  \in \R^d
$ 
as an appropriate approximation 
\begin{equation}
  \A^{\theta}_n(x)
  \approx
  (\tfrac{\partial }{\partial t}\nabla_x u)(t_n,x)+\tfrac12 (\nabla_x \Delta_x u)(t_n,x) 
\end{equation}
of 
$
  (\tfrac{\partial }{\partial t}\nabla_x u)(t_n,x)+\tfrac12 (\nabla_x \Delta_x u)(t_n,x)\in\R^d
$. In particular, we think of $\theta_1$ as an appropriate approximation 
\begin{equation}\label{eq:Theta1ApproxUofXi}
 \theta_1 \approx u(0,\xi)  
\end{equation}
of 
$
u(0,\xi) \in \R,
$ 
and we think of $(\theta_2,\theta_3,\ldots,\theta_{d+1})$ as an 
appropriate approximation 
\begin{equation}\label{eq:appropriateApproximationForNablaAt0}
(\theta_2,\theta_3,\ldots,\theta_{d+1}) \approx (\nabla_x u)(0,\xi)
\end{equation}
of 
$ 
(\nabla_x u)(0,\xi) \in \R^d
$.
We suggest for every $n\in\{0,1,\ldots,N-1\}$ to choose the functions 
$\G^{\theta}_n$
and 
$\A^{\theta}_n$ 
as deep neural networks (cf., e.g., \cite{Bengio2009,LeCun2015}). For example, 
for every $k\in\N$ let 
$ \mathcal{R}_{k} \colon \R^k \to \R^k $ be the function 
which satisfies for all $ x = ( x_1, \dots, x_k ) \in \R^k $ that
\begin{equation}
\label{eq:rectifier}
  \mathcal{R}_k( x ) 
  =
  \left(
    \max\{ x_1, 0 \}
    ,
    \dots
    ,
    \max\{ x_k, 0 \}
  \right)
  ,
\end{equation}
for every 
$ \theta = ( \theta_1, \dots, \theta_{ \nu } ) \in \R^{ \nu } $, 
$ v \in \N_0 = \{0\} \cup \N $,
$ k, l \in \N $
with 
$
  v + k (l + 1 ) \leq \nu
$
let 
$ M^{ \theta, v }_{ k, l } \colon \R^l \to \R^k $ 
be the affine linear function which satisfies for all 
$ x = ( x_1, \dots, x_l ) $ that
\begin{equation}
 M^{ \theta, v }_{ k, l }( x )
  =
  \left(
    \begin{array}{cccc}
      \theta_{ v + 1 }
    &
      \theta_{ v + 2 }
    &
      \dots
    &
      \theta_{ v + l }
    \\
      \theta_{ v + l + 1 }
    &
      \theta_{ v + l + 2 }
    &
      \dots
    &
      \theta_{ v + 2 l }
    \\
      \theta_{ v + 2 l + 1 }
    &
      \theta_{ v + 2 l + 2 }
    &
      \dots
    &
      \theta_{ v + 3 l }
    \\
      \vdots
    &
      \vdots
    &
      \vdots
    &
      \vdots
    \\
      \theta_{ v + ( k - 1 ) l + 1 }
    &
      \theta_{ v + ( k - 1 ) l + 2 }
    &
      \dots
    &
      \theta_{ v + k l }
    \end{array}
  \right)
  \left(
    \begin{array}{c}
      x_1
    \\
      x_2
    \\
      x_3
    \\
      \vdots 
    \\
      x_l
    \end{array}
  \right)
  +
  \left(
    \begin{array}{c}
      \theta_{ v + k l + 1 }
    \\
      \theta_{ v + k l + 2 }
    \\
      \theta_{ v + k l + 3 }
    \\
      \vdots 
    \\
      \theta_{ v + k l + k }
    \end{array}
  \right),
\end{equation}
assume 
that
$
  \nu \geq ( 5 N d + N d^2 + 1 ) (d + 1)
$,
and 
assume 
for all 
$ \theta \in \R^{ \nu } $,
$ n \in \{ m \in \N \colon m < N \} $,
$ x \in \R^d $
that
\begin{equation}
\label{eq:neural_network_for_a}
   \A^{ \theta }_n
    =
    M^{ \theta, [ ( 2N + n ) d + 1] ( d + 1 ) }_{ d, d } 
    \circ 
    \mathcal{R}_d
    \circ 
    M^{ \theta, [ ( N + n ) d + 1] ( d + 1 ) }_{ d, d } \\
    \circ 
    \mathcal{R}_d
    \circ 
    M^{ \theta, (n d + 1) ( d + 1 ) }_{ d, d } 
\end{equation}
and
\begin{equation}\label{eq:neural_network_for_gamma}
  \G^{ \theta }_n
    =
    M^{ \theta, ( 5 N d + n d^2 + 1 ) ( d + 1 ) }_{ d^2, d } 
    \circ 
    \mathcal{R}_d
    \circ 
    M^{ \theta, [ ( 4 N + n) d + 1 ] ( d + 1 ) }_{ d, d }
    \circ 
    \mathcal{R}_d
    \circ 
    M^{ \theta, [ ( 3 N + n) d  + 1 ] ( d + 1 ) }_{ d, d }. 
\end{equation}
The functions in 
\eqref{eq:neural_network_for_a} provide
artifical neural networks with $ 4 $ layers 
($ 1 $ input layer with $ d $ neurons, 
$ 2 $ hidden layers with $ d $ neurons each, 
and $ 1 $ output layer with $ d $ neurons)
and rectifier functions as activation functions
(see \eqref{eq:rectifier}). 
The functions in 
\eqref{eq:neural_network_for_gamma} also provide 
artificial neural networks with $ 4 $ layers
($ 1 $ input layer with $ d $ neurons, 
$ 2 $ hidden layers with $ d $ neurons each, 
and $ 1 $ output layer with $ d^2 $ neurons)
and rectifier functions as activation functions 
(see \eqref{eq:rectifier}).

\subsection{Stochastic gradient descent-type optimization}
\label{subsec:algo_sgd}
We intend to reach 
a \emph{suitable} $\theta\in\R^{\nu}$ in 
\eqref{eq:appropriateApproximationForY}--\eqref{eq:appropriateApproximationForNablaAt0} 
by applying a stochastic gradient descent-type minimization algorithm to the function 
\begin{align}\label{eq:loss_function_mean_square}
 \R^{\nu}\ni\theta\mapsto
 \E\big[
   | \Y^{\theta}_N-g(\xi + W_{t_N})|^2
 \big] \in \R.
\end{align}
Minimizing the function in \eqref{eq:loss_function_mean_square} 
is inspired by the fact that 
\begin{equation}
 \E[|Y_T - g(\xi + W_T)|^2] = 0
\end{equation}
according to 
\eqref{eq:SDEsforYAndZ_everythingPastedIn:Tag1}. 
Applying a stochastic gradient descent-type minimization 
algorithm yields under suitable assumptions random approximations 
\begin{equation}
 \Theta_m = (\Theta_m^{(1)},\Theta_m^{(2)},\ldots,\Theta_m^{(\nu)})
 \colon\Omega\to\R^{\nu}
\end{equation}
for $m\in\N_0$ of a local minimum point of the function in 
\eqref{eq:loss_function_mean_square}. For sufficiently large 
$N, \nu, m\in\N$ we 
use the random variable $\Theta_m^{(1)}\colon\Omega\to\R$ as an 
appropriate approximation 
\begin{equation}
 \Theta_m^{(1)} \approx u(0,\xi)
\end{equation}
of $u(0,\xi)\in\R$ (cf. \eqref{eq:Theta1ApproxUofXi} above). 
In the next subsection the proposed algorithm is described in 
more detail. 

\subsection{Framework for the algorithm in a specific case}
 \label{subsec:specific_case}

In this subsection we describe the 
deep 2BSDE method in the specific case where 
\eqref{eq:fullyNonlinearPDETerminalValueProblem} 
is the PDE under consideration, 
where the standard Euler-Maruyama scheme 
(cf., e.g., \cite{KloedenPlaten1992,Maruyama1955,MilsteinOriginal1974}) is the employed approximation scheme 
for discretizing
\eqref{eq:SDEforYWithLearnablesPluggedIn}
and
\eqref{eq:SDEforZWithLearnablesPluggedIn}
(cf.\ \eqref{eq:deepLearningApprox_Y}
and 
\eqref{eq:deepLearningApprox_Z}),
and where the plain stochastic gradient 
descent with constant learning rate 
$\gamma\in (0,\infty)$ is the employed minimization algorithm. 
A more general description of the deep 2BSDE method, 
which allows to incorporate more sophisticated machine learning approximation techniques such as 
batch normalization or the Adam optimizer, can be found in 
Subsection~\ref{subsec:general_case} below.

\begin{algo}[Special case]
\label{def:specific_case}
Let 
  $T,\gamma\in (0,\infty)$, 
  $d,N\in\N$, 
  $\nu\in \N\cap [d+1,\infty)$,
  $\xi\in\R^d$, 
let
  $f 
  \colon 
  [0,T] \times \R^d \times \R \times \R^d \times \R^{d\times d}
  \to
  \R$ 
  and 
  $g \colon \R^d \to \R$ 
be functions,
let 
$
  ( \Omega, \F, \P )
$ be a probability space, 
let
$
  W^{m}\colon[0,T]\times\Omega\to\R^d
$, 
$ m \in \N_0 $, 
be independent standard Brownian motions on 
  $(\Omega,\F,\P)$,
let $t_0,t_1,\ldots,t_N\in [0,T]$ 
be real numbers with 
  $0=t_0<t_1<\ldots<t_N=T$, 
for every
  $\theta\in\R^{\nu}$, 
  $n\in\{0,1,\ldots,N-1\}$
let
  $\A_n^{\theta}\colon\R^d\to\R^d$ and 
  $\G_n^{\theta}\colon\R^d\to\R^{d\times d}$ be functions, 
for every
  $ m \in \N_0$, 
  $ \theta \in \R^{\nu}$
let
  $
  \Y^{ m, \theta }
  \colon 
  \{0 , 1 , \ldots , N \} \times \Omega \to \R
  $
  and 
  $
  \cZ^{ m , \theta }
  \colon 
  \{ 0 , 1 , \ldots , N \} \times \Omega \to \R^d
  $
be stochastic processes which satisfy 
that 
  $ \Y^{ m , \theta }_0 = \theta_1$ and
  $ \cZ^{ m , \theta }_0 = (\theta_2,\theta_3,\ldots,\theta_{d+1}) $
and which satisfy for all $n\in\{0,1,\ldots,N-1\}$ 
that 
\begin{equation}\label{eq:euler_maruyamaY} 
 \begin{split}
 & \Y^{ m , \theta }_{ n+1 }
 = 
 \Y^{ m , \theta }_{ n } 
 + 
 \langle 
    \cZ^{ m , \theta }_{ n } , W^{ m }_{ t_{n+1} } - W^{ m }_{ t_n } 
  \rangle_{\R^d}  
 \\  
 & + 
 \Bigl( 
  f \bigl( 
    t_n, 
    \xi + W^m_{t_n}, 
    \Y^{ m , \theta }_n , 
    \cZ^{ m , \theta }_n ,
    \G^{ \theta }_n (\xi+W^m_{t_n})
    \bigr) 
  + 
  \tfrac12 \operatorname{Trace}
  \bigl( 
   \G^{ \theta }_n( \xi + W^{ m }_{t_n} ) 
  \bigr)
 \Bigr) 
 ( t_{n+1} - t_n )
\end{split}
\end{equation}
\begin{equation}\label{eq:euler_maruyamaZ}
 \text{and}\qquad\cZ^{ m , \theta }_{ n+1 }
 =
 \cZ^{ m , \theta }_{ n }
 + 
 \A^{ \theta }_{ n } ( \xi + W^{ m }_{ t_n } )\,
 ( t_{ n+1 } - t_{ n } )
 + 
 \G^{ \theta }_{ n }( \xi + W^{ m }_{t_n} )\,
 ( W^{ m }_{ t_{n+1} } - W^{ m }_{ t_n } ), 
\end{equation}
for every $m\in\N_0$ let
  $\phi^{m}\colon \R^{\nu}\times\Omega\to\R$
be the function which satisfies 
  for all $\theta \in \R^{ \nu } $, $\omega\in\Omega$ 
that
  \begin{align}\label{eq:specific_one_sample_of_function_to_minimize}
   \phi^{m}(\theta,\omega) 
   = 
   \bigl|\Y^{m,\theta}_N(\omega)-g\bigl(\xi+W^m_T(\omega)\bigr)\bigr|^2, 
  \end{align}
for every $m\in\N_0$ let
  $\Phi^{m}\colon \R^{\nu}\times\Omega\to\R^{\nu}$
be a function which satisfies for all 
  $\omega\in\Omega$,
  $\theta\in\{\eta\in\R^{\nu}\colon
  ~\phi^{m}(\cdot,\omega)\colon\R^{\nu}\to\R
  ~\text{is differentiable at}~
  \eta\}$
that
  \begin{align}\label{eq:specific_gradient}
   \Phi^{m}(\theta,\omega)
   =
   (\nabla_{\theta}\phi^{m})(\theta,\omega), 
  \end{align}
and let 
  $ \Theta = ( \Theta^{ (1) }, \dots, \Theta^{ (\nu) } ) \colon\N_0\times\Omega\to\R^{\nu}$ 
be a stochastic process which satisfies for all $m\in\N_0$ that
  \begin{align}\label{eq:sgd_step}
   \Theta_{m+1} = \Theta_m - \gamma\cdot\Phi^{m}(\Theta_m). 
  \end{align}
\end{algo}

Under suitable further assumptions, 
we think in the case of sufficiently large  
$N, \nu\in\N$, $m\in\N_0$
and sufficiently small $\gamma\in (0,\infty)$ 
in Framework~\ref{def:specific_case} 
of the random variable 
$\Theta_m = (\Theta_m^{(1)},\ldots,\Theta_m^{(\nu)})\colon\Omega\to\R^{\nu}$ 
as an appropriate approximation of a local minimum point 
of the expected loss function and 
we think in the case of sufficiently large 
$N, \nu\in\N$, $m\in\N_0$ 
and sufficiently small $\gamma\in (0,\infty)$ 
in Framework~\ref{def:specific_case} 
of the random variable 
$\Theta_m^{(1)}\colon\Omega\to\R$ 
as an appropriate approximation of 
the value 
$
  u(0,\xi)\in\R
$
where $u\colon [0,T]\times \R^d\to\R$ is 
an at most polynomially growing continuous function which 
satisfies for all $(t,x)\in [0,T)\times\R^d$ that 
  $u|_{[0,T)\times\R^d}\in C^{1,2}([0,T)\times\R^d,\R)$,
  $u(T,x)=g(x)$, 
  and 
\begin{align}\label{eq:specific_fullyNonlinearPDE}
 \tfrac{\partial u}{\partial t}(t,x) 
 = 
 f\bigl(t, x, u(t,x),(\nabla_x u)(t,x),(\operatorname{Hess}_x u)(t,x)\bigr).
\end{align}
In Subsection~\ref{subsec:example_allen_cahn_plain_sgd_no_bn} 
below an implementation of Framework~\ref{def:specific_case} 
(see {\sc Python} code~\ref{code:deepPDEmethodPlainSGDNoBN} 
in Appendix \ref{subsec:plainSGDCode} below) is 
employed to calculate numerical approximations for 
the Allen-Cahn equation in $20$ space-dimensions 
($d=20$). In Subsection~\ref{subsec:example_allen_cahn} 
below numerical approximations for the 
Allen-Cahn equation in $50$ space-dimensions 
are calculated by means of the algorithm 
in the more general setting in Framework 
\ref{def:general_algorithm} below.

\section{The deep 2BSDE method in the general case}\label{sec:details}
 In this section we extend and generalize the approximation scheme derived and presented in 
Section~\ref{sec:algorithm_main_ideas}. 
The core idea of the approximation scheme in this section remains the same as in 
Section~\ref{sec:algorithm_main_ideas}
but, in contrast to Section~\ref{sec:algorithm_main_ideas}, 
in this section the background dynamics in the approximation scheme may be 
a more general It\^o process than just a Brownian motion
(cf.\ Lemma~\ref{lem:applying_ito} in 
Subsection~\ref{subsec:details_connection} below) 
and, in contrast to Section~\ref{sec:algorithm_main_ideas}, in this section the approximation scheme may employ
more sophisticated machine learning 
techniques (cf.\ Framework~\ref{def:general_algorithm} in 
Subsection~\ref{subsec:general_case} below). 

\subsection{Fully nonlinear second-order PDEs}
\label{subsec:details_fullyNonlinear}
Let 
  $d\in\N$, 
  $T\in (0,\infty)$, 
let $u=(u(t,x))_{t\in [0,T],x\in\R^d}\in C^{1,2}([0,T]\times\R^d,\R)$, 
$f\colon [0,T]\times\R^d\times\R\times\R^d\times\R^{d\times d}\to\R$, 
and $g\colon [0,T]\times\R^d\to\R$ be functions which satisfy for all 
$(t,x)\in [0,T)\times\R^d$ that 
  $u(T,x) = g(x)$ 
  and 
  \begin{align}\label{eq:notes_fully_nonlinear_pde}
   \tfrac{\partial u}{\partial t}(t,x) 
   = 
   f
   \bigl( 
    t, x, u(t,x), (\nabla_x u)(t,x), (\operatorname{Hess}_x u)(t,x) 
   \bigr).
  \end{align}
Our goal is to approximatively compute the solution $ u $ of the PDE~\eqref{eq:notes_fully_nonlinear_pde}
at time $ t = 0 $, that is, our goal is to approximatively calculate the function 
$
  \R^d \ni x \mapsto u( 0, x ) \in \R
$.
For this, 
we make use of the following connection between fully nonlinear second-order PDEs 
and second-order BSDEs. 

\subsection{Connection between fully nonlinear second-order PDEs and 2BSDEs}
\label{subsec:details_connection}

The deep 2BSDE method relies on a 
connection between fully nonlinear second-order PDEs and 
second-order BSDEs; cf., e.g., Theorem~4.10 in Cheridito et al.~\cite{CheriditoSonerTouziVictoir2007} 
and Lemma~\ref{lem:applying_ito} below.

\begin{lemma}[Cf., e.g., Section~3 in Cheridito et al.~\cite{CheriditoSonerTouziVictoir2007}]
\label{lem:applying_ito}
Let 
  $d\in\N$, 
  $T\in (0,\infty)$, 
let
  $u=(u(t,x))_{t\in [0,T], x\in\R^d}\in C^{1,2}([0,T]\times\R^d,\R)$, 
  $\mu\in C(\R^d,\R^d)$,
  $\sigma\in C(\R^d,\R^{d\times d})$, 
  $f\colon [0,T]\times\R^d\times\R\times\R^d\times\R^{d\times d}\to\R$,  
  and $g\colon \R^d\to\R$ 
  be functions which satisfy for all $t\in [0,T)$, $x\in\R^d$ 
  that 
  $\nabla_x u\in C^{1,2}([0,T]\times\R^d,\R^d)$, 
  $u(T,x) = g(x)$, 
  and 
  \begin{align}\label{eq:lemma_fullyNonlinearPDETerminalValueProblem}
   \tfrac{\partial u}{\partial t}(t,x) 
    & = 
   f\bigl(t,x,u(t,x),(\nabla_x u)(t,x),(\operatorname{Hess}_x u)(t,x)\bigr), 
  \end{align}
let $(\Omega,\F,\P)$ 
  be a probability space,  
let  
  $ W = ( W^{ (1) }, \dots, W^{ (d) } ) \colon [0,T]\times\Omega\to\R^d$ 
  be a standard Brownian motion on $(\Omega,\F,\P)$, 
let 
  $\bF = (\bF_t)_{t\in [0,T]}$ 
  be the normal filtration on $(\Omega,\F,\P)$ 
  generated by $W$, 
let 
  $\xi\colon\Omega\to\R^d$ be an $\F_0/\B(\R^d)$-measurable function, 
let 
  $X=(X^{(1)},\ldots,X^{(d)})\colon[0,T]\times\Omega\to\R^d$ 
  be an $\bF$-adapted stochastic process with continuous 
  sample paths which satisfies that for all $t\in [0,T]$ 
  it holds $\P$-a.s.~that 
  \begin{align}\label{eq:lemma_SDEforX}
   X_t = \xi + \int_0^t \mu(X_s)\,ds + \int_0^t \sigma(X_s)\,dW_s,  
  \end{align}
for every $\varphi\in C^{1,2}([0,T]\times\R^d,\R)$ 
  let $\cL\varphi\colon[0,T]\times\R^d\to\R$ be the function which 
  satisfies for all $(t,x)\in [0,T]\times\R^d$ that 
  \begin{align}\label{eq:dynkinOperator_definition}
   (\cL\varphi)(t, x) = 
   (\tfrac{\partial\varphi}{\partial t})(t,x) 
   + 
   \tfrac12\operatorname{Trace}\bigl(
    \sigma(x)\sigma(x)^{\ast}(\operatorname{Hess}_x\varphi)(t,x)
    \bigr), 
  \end{align}
and let 
  $Y\colon [0,T]\times\Omega\to\R$, 
  $Z=(Z^{(1)},\ldots,Z^{(d)})\colon [0,T]\times\Omega\to\R^d$, 
  $\Gamma=(\Gamma^{(i,j)})_{(i,j)\in\{1,\ldots,d\}^2}\colon [0,T]\times\Omega\to\R^{d\times d}$, 
  and 
  $A=(A^{(1)},\ldots,A^{(d)})\colon [0,T]\times\Omega\to\R^d$ 
  be the stochastic processes which satisfy for 
  all $t\in [0,T]$, $i\in\{1,2,\ldots,d\}$ that 
  \begin{align}\label{eq:lemma_YZGAdefinition}
   Y_t = u(t,X_t),\quad 
   Z_t=(\nabla_x u)(t,X_t),\quad
   \Gamma_t = (\operatorname{Hess}_x u)(t,X_t),\quad
   A_t^{(i)}=(\cL(\tfrac{\partial u}{\partial x_i}))(t,X_t). 
  \end{align}
%
  Then it holds that $Y$, $Z$, $\Gamma$, and $A$ are $\bF$-adapted stochastic 
  processes with continuous sample paths which satisfy  
  that for all $t\in [0,T]$ it holds $\P$-a.s.~that 
  \begin{equation}\label{eq:lemma_SDEforY}
   Y_t = 
   g(X_T) - 
   \int_t^T
     \bigl( 
     f(s,X_s, Y_s, Z_s, \Gamma_s) + \tfrac12\operatorname{Trace}(\sigma(X_s)\sigma(X_s)^{\ast}\Gamma_s)
     \bigr)\,ds 
   - 
   \int_t^T
     \langle Z_s, dX_s\rangle_{\R^d}
  \end{equation}
  and
  \begin{equation}\label{eq:lemma_SDEforZ} 
   Z_t =  Z_0 + 
   \int_0^t A_s\,ds 
   + 
   \int_0^t \Gamma_s\,dX_s. 
  \end{equation}
\end{lemma}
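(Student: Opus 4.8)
The plan is to read the assertion as two applications of It\^o's formula --- one to $u$ and one to each component of $\nabla_x u$ --- combined with an algebraic rewriting that uses the PDE~\eqref{eq:lemma_fullyNonlinearPDETerminalValueProblem}, the definition of the generator $\cL$ in~\eqref{eq:dynkinOperator_definition}, and the symmetry of the Hessian. First I would dispose of the regularity and measurability claims. Since $X$ is $\bF$-adapted with continuous sample paths and since $u$, $\nabla_x u$, $\operatorname{Hess}_x u$ and, because $\nabla_x u\in C^{1,2}$, each function $\cL(\tfrac{\partial u}{\partial x_i})$ is continuous, the compositions defining $Y$, $Z$, $\Gamma$, and $A$ in~\eqref{eq:lemma_YZGAdefinition} are $\bF$-adapted and have continuous sample paths. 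On the compact interval $[0,T]$ continuity makes these sample paths pathwise bounded, which, together with the continuity of $\mu(X)$ and $\sigma(X)$, ensures that the $ds$-integrals are well-defined Lebesgue integrals and that the $dW_s$-parts are well-defined continuous local martingales; via the decomposition $dX_s = \mu(X_s)\,ds + \sigma(X_s)\,dW_s$ this gives meaning to the integrals $\int_t^T\langle Z_s,dX_s\rangle_{\R^d}$ and $\int_0^t\Gamma_s\,dX_s$ appearing in~\eqref{eq:lemma_SDEforY}--\eqref{eq:lemma_SDEforZ}.

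Next I would apply It\^o's formula to the $C^{1,2}$ function $u$ along the It\^o process $X$ from~\eqref{eq:lemma_SDEforX}, using $d\langle X^{(i)},X^{(j)}\rangle_s = (\sigma(X_s)\sigma(X_s)^{\ast})_{ij}\,ds$ for the second-order term. This yields, for every $t\in[0,T]$, $\P$-a.s.,
\begin{equation}
\begin{split}
 Y_T - Y_t &= \int_t^T \Bigl[ (\tfrac{\partial u}{\partial t})(s,X_s) + \tfrac12\operatorname{Trace}\bigl(\sigma(X_s)\sigma(X_s)^{\ast}(\operatorname{Hess}_x u)(s,X_s)\bigr)\Bigr]\,ds \\
 &\quad + \int_t^T \langle (\nabla_x u)(s,X_s), dX_s\rangle_{\R^d}.
\end{split}
\end{equation}
The PDE~\eqref{eq:lemma_fullyNonlinearPDETerminalValueProblem} then lets me replace $(\tfrac{\partial u}{\partial t})(s,X_s)$ by $f(s,X_s,Y_s,Z_s,\Gamma_s)$; this substitution is valid for $s\in[0,T)$ and, as $\{T\}$ is Lebesgue-null, leaves the value of the $ds$-integral unchanged. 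Inserting $Y_T = u(T,X_T) = g(X_T)$ and rearranging then produces exactly~\eqref{eq:lemma_SDEforY}.

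Finally I would obtain~\eqref{eq:lemma_SDEforZ} by applying It\^o's formula componentwise to $Z^{(i)}_t = (\tfrac{\partial u}{\partial x_i})(t,X_t)$, which is licit precisely because $\nabla_x u\in C^{1,2}$. For each $i$ the resulting drift is $(\tfrac{\partial}{\partial t}\tfrac{\partial u}{\partial x_i})(s,X_s) + \tfrac12\operatorname{Trace}(\sigma(X_s)\sigma(X_s)^{\ast}(\operatorname{Hess}_x\tfrac{\partial u}{\partial x_i})(s,X_s))$, which by~\eqref{eq:dynkinOperator_definition} equals $(\cL(\tfrac{\partial u}{\partial x_i}))(s,X_s) = A^{(i)}_s$, while the martingale part is $\langle \nabla_x(\tfrac{\partial u}{\partial x_i})(s,X_s), dX_s\rangle_{\R^d}$. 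Using $\partial_{x_i}\partial_{x_j} u = \partial_{x_j}\partial_{x_i} u$, the vector built from these components is $(\operatorname{Hess}_x u)(s,X_s)\,dX_s = \Gamma_s\,dX_s$, so that $dZ_s = A_s\,ds + \Gamma_s\,dX_s$; integrating from $0$ to $t$ gives~\eqref{eq:lemma_SDEforZ}.

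I expect no serious obstacle here, only one point of care: the PDE is assumed only on $[0,T)$, so the replacement of $\tfrac{\partial u}{\partial t}$ by $f$ must be justified up to the Lebesgue-null set $\{T\}$, and the passage from the differential (open-interval) identities to the closed-interval integral statements in~\eqref{eq:lemma_SDEforY}--\eqref{eq:lemma_SDEforZ} must invoke continuity of the sample paths at $t=T$.
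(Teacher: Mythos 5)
Your proposal is correct and follows essentially the same route as the paper's proof: establish adaptedness and continuity from the continuity of $u$, $\nabla_x u$, $\operatorname{Hess}_x u$, $\cL(\tfrac{\partial u}{\partial x_i})$ and of the sample paths of $X$, then apply It\^o's formula once to $u$ (substituting the PDE for $\tfrac{\partial u}{\partial t}$) to get \eqref{eq:lemma_SDEforY} and once componentwise to $\nabla_x u$ to get \eqref{eq:lemma_SDEforZ}. Your extra remark about the PDE holding only on $[0,T)$ and the null set $\{T\}$ is a point of care the paper leaves implicit.
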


\begin{proof}[Proof of Lemma \ref{lem:applying_ito}]
 Note that 
 $u\colon [0,T]\times\R^d\to\R$, 
 $\nabla_x u\colon [0,T]\times\R^d\to\R^d$, 
 $\operatorname{Hess}_x u\colon [0,T]\times\R^d\to\R^{d\times d}$, 
 and 
 $\bigl(\cL\tfrac{\partial u}{\partial x_i}\bigr)\colon [0,T]\times\R^d\to\R$, $i\in\{1,2,\ldots,d\}$, 
 are continuous functions. Combining this and \eqref{eq:lemma_YZGAdefinition} 
 with the continuity of the  sample paths of $X$ shows  
 that $Y$, $Z$, $\Gamma$, and $A$ are $\bF$-adapted 
 stochastic process with continuous sample paths. 
 Next observe that It\^o's lemma and the assumption that 
 $u\in C^{1,2}([0,T]\times\R^d,\R)$ 
 yield that  
 for all $ r \in [0,T]$ it holds $\P$-a.s.~that 
 \begin{equation}\label{eq:SDEforY}
  \begin{split}
 u(T,X_T) &= u(r,X_{r})
 +
 \int_{r}^T \langle(\nabla_x u)(s, X_s), dX_s\rangle_{\R^d} \\
 &\quad
 + \int_{r}^T\Bigl( 
  (\tfrac{\partial u}{\partial t})(s,X_s) 
  + \tfrac12 \operatorname{Trace}
  \bigl(
   \sigma(X_s)\sigma(X_s)^{\ast}(\operatorname{Hess}_x u)(s,X_s)
  \bigr)
  \Bigr)
 \,ds.
  \end{split}
 \end{equation}
 This, 
 \eqref{eq:lemma_fullyNonlinearPDETerminalValueProblem}, 
 and 
 \eqref{eq:lemma_YZGAdefinition}
 yield that for all $ r \in [0,T] $ it holds $\P$-a.s. that
 \begin{equation}
 \begin{split}
  g(X_T) & = 
  Y_{ r } + \int_{ r }^T 
  \langle Z_s, dX_s\rangle_{\R^d}
  \\  
  & \quad   + 
  \int_{ r }^T
  \Bigl( 
  f(s,X_s,Y_s,Z_s,\Gamma_s) 
  + \tfrac12 \operatorname{Trace}
  \bigl(
   \sigma(X_s)\sigma(X_s)^{\ast}\Gamma_s
  \bigr)
  \Bigr)
 \,ds. 
 \end{split}
 \end{equation}
 This establishes \eqref{eq:lemma_SDEforY}. 
 In the next step we note that It\^o's lemma and 
 the hypothesis that $\nabla_x u = ( \frac{ \partial u }{ \partial x_1 } , \dots, \frac{ \partial u }{ \partial x_d } ) \in C^{1,2}([0,T]\times\R^d,\R^d)$
 guarantee that
 for all $i\in\{1,2,\ldots,d\}$, $ r \in [0,T]$ it holds $\P$-a.s.~that
 \begin{equation}\label{eq:prooflem_applyItoToZi}
  \begin{split}
  ( \tfrac{ \partial u }{ \partial x_i } )( r, X_r )
  & = 
  ( \tfrac{ \partial u }{ \partial x_i } )( 0, X_0 )
  + \int_0^{ r } \langle (\nabla_x\tfrac{\partial u}{\partial x_i})
  (s,X_s),\,dX_s\rangle_{\R^d} 
\\
  & \quad + \int_0^{ r } \Bigl( 
  (\tfrac{\partial}{\partial t}\tfrac{\partial u}{\partial x_i})
  (s,X_s) 
  + \tfrac12 \operatorname{Trace}\bigl( 
  \sigma(X_s)\sigma(X_s)^{\ast}(\operatorname{Hess}_x \tfrac{\partial u}{\partial x_i})(s, X_s)\bigr)
  \Bigr)
 \,ds.
  \end{split}
 \end{equation}
This, \eqref{eq:dynkinOperator_definition}, and \eqref{eq:lemma_YZGAdefinition} 
yield that for all 
$i\in\{1,2,\ldots,d\}$, $ r \in [0,T]$ it holds $\P$-a.s.~that
\begin{equation}
 \begin{split}
  Z^{(i)}_{r} 
& =
  Z^{(i)}_0 
  +
  \sum_{j=1}^d 
  \int_0^{r}  (\tfrac{\partial}{\partial x_j}\tfrac{\partial u}{\partial x_i})(s,X_s)\,dX^{(j)}_s 
  \\
&
\quad 
  + 
  \int_0^{r} \Bigl( 
  (\tfrac{\partial}{\partial t}\tfrac{\partial u}{\partial x_i})(s,X_s) 
  + \tfrac12 \operatorname{Trace}\bigl( 
  \sigma(X_s)\sigma(X_s)^{\ast}
  (\operatorname{Hess}_x\tfrac{\partial u}{\partial x_i})(s, X_s)\bigr)
  \Bigr)
 \,ds \\
  &= 
  Z^{(i)}_0 
  + \int_0^{r}  
  A^{(i)}_s \,ds
  + \sum_{j=1}^d
  \int_0^{r} \Gamma^{(i,j)}_s \,dX^{(j)}_s
  .
 \end{split}
\end{equation}
This shows \eqref{eq:lemma_SDEforZ}. The proof of Lemma~\ref{lem:applying_ito} is thus completed. 
\end{proof}

In Subsection~\ref{subsec:algorithm_connection_pde_bsde} above we have employed
Lemma~\ref{lem:applying_ito} 
in the specific situation where 
$ \forall \, x \in \R^d \colon \mu(x) = 0 \in \R^d $
and 
$
  \forall \, x \in \R^d \colon \sigma(x) = \operatorname{Id}_{ \R^d } \in \R^{ d \times d }
$
(cf.\ \eqref{eq:SDEsforYAndZ_everythingPastedIn:Tag1}--\eqref{eq:DefinitionA} 
in Subsection~\ref{subsec:algorithm_connection_pde_bsde}
and \eqref{eq:lemma_SDEforY}--\eqref{eq:lemma_SDEforZ} 
in Lemma~\ref{lem:applying_ito}). 
In the following we proceed 
with the merged formulation, the forward-discretization 
of the merged PDE-2BSDE system, and deep learning approximations
similar as in Section~\ref{sec:algorithm_main_ideas}.

\subsection{Merged formulation of the PDE and the 2BSDE}

In this subsection we derive a merged formulation
(see \eqref{eq:mergedForm_SDEforYPluggedIn} and \eqref{eq:mergedForm_SDEforZPluggedIn})
for the PDE~\eqref{eq:notes_fully_nonlinear_pde} and 
the 2BSDE system \eqref{eq:SDEforYt1t2_general}--\eqref{eq:SDEforZt1t2_general} 
as in Subsection~\ref{subsec:algo_mergedFormulation}. 
To derive the merged formulation of the PDE and the 2BSDE, we 
employ the following hypotheses in addition to the assumptions 
in Subsection~\ref{subsec:details_fullyNonlinear} above (cf.\ Lemma~\ref{lem:applying_ito} above).
Let 
$
  \mu \colon \R^d \to \R^d
$ 
and
$
  \sigma \colon \R^d \to \R^{d\times d}
$ 
be continuous functions, 
let $(\Omega,\F,\P)$ 
  be a probability space,  
let  
  $W\colon [0,T]\times\Omega\to\R^d$ 
  be a standard Brownian motion on $(\Omega,\F,\P)$, 
let 
  $\bF = (\bF_t)_{t\in [0,T]}$ 
  be the normal filtration on $(\Omega,\F,\P)$ 
  generated by $W$, 
let 
  $\xi\colon\Omega\to\R^d$ be an $\F_0/\B(\R^d)$-measurable 
  function, 
let 
  $X=(X^{(1)},\ldots,X^{(d)})\colon[0,T]\times\Omega\to\R^d$ 
  be an $\bF$-adapted stochastic process with continuous 
  sample paths which satisfies that for all $t\in [0,T]$ 
  it holds $\P$-a.s.~that 
  \begin{align}\label{eq:mergedForm_SDEforX}
   X_t = \xi + \int_0^t \mu(X_s)\,ds + \int_0^t \sigma(X_s)\,dW_s,  
  \end{align}
let $ e^{ (d) }_1 = ( 1, 0, \dots, 0 ) $,
$ e^{ (d) }_2 = ( 0, 1, 0, \dots, 0 ) $,
$ \dots $,
$ e^{ (d) }_d = ( 0, \dots, 0, 1 ) \in \R^d $ be the 
standard basis vectors of $\R^d$,
for every $\varphi\in C^{1,2}([0,T]\times\R^d,\R^d)$ 
  let $\cL\varphi\colon[0,T]\times\R^d\to\R^d$ be the function which 
  satisfies for all $(t,x)\in [0,T]\times\R^d$ that 
  \begin{equation}
   (\cL\varphi)(t, x) 
   = 
   \tfrac{\partial\varphi}{\partial t}(t,x) 
   +
   \tfrac{ 1 }{ 2 }
   \textstyle
   \sum\limits_{i=1}^d
   \big( \tfrac{ \partial^2 \varphi }{\partial x^2} \big)(t,x) 
   \big(
     \sigma(x) e^{ (d) }_i ,
     \sigma(x) e^{ (d) }_i 
   \big)
  \end{equation}
and let 
  $Y\colon [0,T]\times\Omega\to\R$, 
  $Z\colon [0,T]\times\Omega\to\R^d$, 
  $\Gamma\colon [0,T]\times\Omega\to\R^{d\times d}$, 
  and 
  $A\colon [0,T]\times\Omega\to\R^d$ 
  be the stochastic processes which satisfy for 
  all $t\in [0,T]$ that 
  \begin{align}\label{eq:mergedForm_YZGAdefinition}
   Y_t = u(t,X_t),\quad 
   Z_t=(\nabla_x u)(t,X_t),\quad
   \Gamma_t = (\operatorname{Hess}_x u)(t,X_t),\quad
   A_t=(\cL(\nabla_x u))(t,X_t). 
  \end{align}
Lemma \ref{lem:applying_ito} implies that 
for all $\tau_1,\tau_2\in [0,T]$ 
with $\tau_1\leq \tau_2$ 
it holds
$\P$-a.s.~that 
\begin{equation}
 X_{\tau_2} = X_{\tau_1} 
 + \int_{\tau_1}^{\tau_2} 
 \mu(X_s)\,ds 
 + \int_{\tau_1}^{\tau_2}
 \sigma(X_s)\,dW_s, \label{eq:notes_SDEforX_t1t2}
\end{equation}
\begin{equation}\label{eq:SDEforYt1t2_general}
 Y_{\tau_2} = Y_{\tau_1} 
 + \int_{\tau_1}^{\tau_2} 
 \bigl( 
  f(s,X_s,Y_s,Z_s,\Gamma_s) + \tfrac12\text{Trace}(\sigma(X_s)\sigma(X_s)^{\ast}\Gamma_s)
  \bigr)\,ds 
 + 
 \int_{\tau_1}^{\tau_2}
  \langle Z_s, dX_s\rangle_{\R^d}, 
\end{equation}
and
\begin{equation}\label{eq:SDEforZt1t2_general}
 Z_{\tau_2} =  Z_{\tau_1} + 
 \int_{\tau_1}^{\tau_2} A_s\,ds 
 + 
 \int_{\tau_1}^{\tau_2 } \Gamma_s\,dX_s. 
\end{equation}
Putting the third and the fourth identity in \eqref{eq:mergedForm_YZGAdefinition} into 
\eqref{eq:SDEforYt1t2_general} and 
\eqref{eq:SDEforZt1t2_general} yields 
that for 
all 
$ \tau_1, \tau_2 \in [0,T] 
$ 
with $\tau_1\leq \tau_2$ 
it holds $\P$-a.s.~that
\begin{equation}\label{eq:mergedForm_SDEforYPluggedIn}
\begin{split}
 &Y_{\tau_2} = Y_{\tau_1} 
 + 
 \int_{\tau_1}^{\tau_2}
  \langle Z_s, dX_s\rangle_{\R^d} \\
 & 
 + \int_{\tau_1}^{\tau_2} 
 \Bigl( 
  f\bigl(s,X_s,Y_s,Z_s,(\operatorname{Hess}_x u)(s,X_s)\bigr) 
  + 
  \tfrac12\text{Trace}\bigl(\sigma(X_s)\sigma(X_s)^{\ast}(\operatorname{Hess}_x u)(s,X_s)\bigr)
  \Bigr)\,ds 
  \end{split}
\end{equation}
and 
\begin{equation}\label{eq:mergedForm_SDEforZPluggedIn}
 Z_{\tau_2} =  Z_{\tau_1} + 
 \int_{\tau_1}^{\tau_2} \bigl(\cL(\nabla_x u)\bigr)(s,X_s)\,ds 
 + 
 \int_{\tau_1}^{\tau_2 } (\operatorname{Hess}_x u)(s,X_s)\,dX_s. 
\end{equation}

\subsection{Forward-discretization of the merged PDE-2BSDE system}
In this subsection we derive a forward-discretization 
of the merged PDE-2BSDE system 
\eqref{eq:mergedForm_SDEforYPluggedIn}--\eqref{eq:mergedForm_SDEforZPluggedIn} 
(cf.\ Subsection~\ref{subsec:algo_forward_discretization}). 
Let $t_0,t_1,\ldots,t_N\in [0,T]$ be real numbers with 
\begin{equation}
 0=t_0<t_1<\ldots<t_N=T
\end{equation}
such that the small mesh size 
  $\sup_{0\leq k\leq N} (t_{k+1}-t_k)$
is sufficiently small. Note that 
\eqref{eq:mergedForm_YZGAdefinition},
\eqref{eq:notes_SDEforX_t1t2}, 
\eqref{eq:mergedForm_SDEforYPluggedIn}, 
and 
\eqref{eq:mergedForm_SDEforZPluggedIn} 
suggest that 
for sufficiently large $N\in\N$ it holds 
for all $n\in\{0,1,\ldots,N-1\}$ that
\begin{equation}
\label{eq:initial_condition}
  X_{ t_0 } 
  = X_0 
  = \xi 
  ,
  \qquad
  Y_{ t_0 } 
  = Y_0 
  = u( 0, \xi )
  ,
  \qquad
  Z_{ t_0 } 
  = Z_0 
  = ( \nabla_x u )( 0, \xi )
  ,
\end{equation}
\begin{equation}
\label{eq:X_approximate_derivation}
 X_{t_{n+1}} 
 \approx 
 X_{t_n} 
 + 
 \mu(X_{t_n})\,(t_{n+1}-t_n) 
 + 
 \sigma(X_{t_n})\,(X_{t_{n+1}}-X_{t_n}), 
\end{equation}
\begin{equation}
\label{eq:notes_Yapprox}
\begin{split}
 &Y_{t_{n+1}}
 \approx
 Y_{t_n}
 + 
 \Bigl(
 f\bigl(t_n,X_{t_n},Y_{t_n},Z_{t_n},(\operatorname{Hess}_x u)(t_n,X_{t_n})\bigr) 
 \\
 & 
 + 
 \tfrac12 \text{Trace}\bigl(\sigma(X_{t_n})\sigma(X_{t_n})^{\ast}(\operatorname{Hess}_x u)(t_n,X_{t_n})\bigr)
 \Bigr)\,(t_{n+1}-t_n) 
 + \langle Z_{t_n} , X_{t_{n+1}}-X_{t_n}\rangle_{\R^d}  
 , 
\end{split}
\end{equation}
and
\begin{equation}\label{eq:notes_Zapprox}
 Z_{t_{n+1}} 
 \approx 
 Z_{t_n} + 
 \bigl(\cL(\nabla_x u)\bigr)(t_n,X_{t_n}) \,(t_{n+1}-t_n) 
 + 
 (\operatorname{Hess}_x u)(t_n,X_{t_n}) \,(X_{t_{n+1}}-X_{t_n})
\end{equation}
(cf.\ 
\eqref{eq:algorithm_Yapprox}--\eqref{eq:algorithm_Zapprox} in Subsection~\ref{subsec:algo_forward_discretization} above).

\subsection{Deep learning approximations}
\label{subsec:details_deepandsgd}
In the next step we employ 
suitable approximations for the functions 
\begin{equation}
 \R^d \ni x \mapsto u(0,x) \in \R
 \qquad\text{and}\qquad
 \R^d \ni x \mapsto (\nabla_x u)(0,x) \in \R^d
\end{equation}
in \eqref{eq:initial_condition}
and we employ 
for every $n\in\{0,1,\ldots,N-1\}$ 
suitable approximations for the functions 
\begin{equation}
 \R^d \ni x \mapsto 
 (\operatorname{Hess}_x u)(t_n,x)\in\R^{d\times d}
\qquad\text{and}\qquad 
 \R^d \ni x \mapsto 
 \bigl(\cL(\nabla_x u)\bigr)(t_n,x)
 \in \R^d 
\end{equation}
in \eqref{eq:notes_Yapprox}--\eqref{eq:notes_Zapprox}.
However, 
we do neither employ approximations for the functions  
$\R^d\ni x\mapsto u(t_n,x)\in\R$, $ n \in \{1,2,\ldots,N-1\} $, 
nor for the functions
$\R^d\ni x\mapsto (\nabla_x u)(t_n,x)\in\R^d$, $n\in\{1,2,\ldots,N-1\}$. 
More formally, 
let 
  $\X\colon\{0,1,\ldots,N\}\times\Omega\to \R^d$
  be a stochastic process 
which satisfies for all $ n \in \{ 0, 1, \ldots, N - 1 \} $ that 
$\X_0 = \xi$
and
\begin{equation}\label{eq:notes_Xapprox}
 \X_{n+1} 
 = 
 \X_n 
 + \mu(\X_n)\,(t_{n+1}-t_n) 
 + \sigma(\X_n)\,(\X_{n+1}-\X_n) , 
\end{equation}  
let 
  $\nu\in\N$, 
for every
$ \theta\in\R^{\nu} $
let
  $\U^{\theta}\colon\R^d\to\R$ and 
  $\V^{\theta}\colon\R^d\to\R^d$ 
  be continuous functions, 
for every
  $\theta\in\R^{\nu}$, $n\in\{0,1,\ldots,N-1\}$
let
  $\G_n^{\theta}\colon\R^d\to\R^{d\times d}$ 
  and 
  $\A_n^{\theta}\colon\R^d\to\R^d$
  be continuous functions, 
and 
for every 
$ \theta=(\theta_1,\ldots,\theta_{\nu})\in\R^{\nu}$ 
let 
$\Y^{\theta}\colon\{0,1,\ldots,N\}\times\Omega\to \R$
and 
$\cZ^{\theta}\colon\{0,1,\ldots,N\}\times\Omega\to \R^d$ 
be stochastic processes 
which satisfy 
  $\Y_0^{\theta} = \U^{\theta}(\xi)$ and 
  $\cZ^{\theta}_0 = \V^{\theta}(\xi)$ and 
which satisfy for all $n\in\{0,1,\ldots,N-1\}$ that 
\begin{equation}
 \begin{split}
  &\Y^{\theta}_{n+1} = \Y_n 
 + \langle \cZ^{\theta}_n, \X_{t_{n+1}}-\X_{t_n} \rangle_{\R^d}
 \\ 
 & + 
 \Bigl(f(t_n,\X_n,\Y^{\theta}_n,\cZ^{\theta}_n,\G^{\theta}_n(\X_n)) 
 + 
 \tfrac12 \operatorname{Trace}\bigl(\sigma(\X_n)\sigma(\X_n)^{\ast}\G^{\theta}_n(\X_n)\bigr)
 \Bigr)\,(t_{n+1}-t_n)
\end{split}
\end{equation}
and
\begin{align}
 \cZ^{\theta}_{n+1} = \cZ^{\theta}_n + \A^{\theta}_n(\X_n)\,(t_{n+1}-t_n) 
 + \G^{\theta}_n(\X_n)\,(\X_{n+1}-\X_n). 
\end{align}
For all suitable $\theta\in\R^{\nu}$  
and all $n\in\{0,1,\ldots,N\}$ 
we think of $\Y^{\theta}_n\colon\Omega\to\R$ as an 
appropriate approximation 
\begin{equation}\label{eq:GeneralAppropriateApproximationForY}
\Y^{\theta}_n \approx Y_{t_n}
\end{equation} 
of $Y_{t_n}\colon\Omega\to\R$, 
for all suitable $\theta\in\R^{\nu}$ 
and all $n\in\{0,1,\ldots,N\}$ 
we think of $\cZ^{\theta}_n\colon\Omega\to\R^d$ as an 
appropriate approximation 
\begin{equation}
 \cZ^{\theta}_n \approx Z_{t_n}
\end{equation}
of $Z_{t_n}\colon\Omega\to\R$, 
for all suitable $ \theta \in \R^{ \nu } $, $ x \in \R^d $
we think of $\U^{\theta}( x ) \in \R $ 
as an appropriate approximation 
\begin{equation}
  \U^{ \theta }( x ) \approx u(0, x)
\end{equation}
of $ u(0,x) \in \R $,
for all suitable $ \theta \in \R^{ \nu } $, $ x \in \R^d $
we think of $ \V^{\theta}( x ) \in \R^d$ 
as an appropriate approximation 
\begin{equation}
 \V^{ \theta }( x ) \approx ( \nabla_x u)( 0, x ) 
\end{equation}
of $ ( \nabla_x u)( 0, x ) \in \R^d $,
for all suitable 
$\theta\in\R^{\nu}$, $x\in\R^d$ and all $n\in\{0,1,\ldots,N-1\}$ 
we think of $\G^{\theta}_n(x)\in\R^{d\times d}$ 
as an approprate approximation
\begin{equation}
 \G^{\theta}_n(x) \approx (\operatorname{Hess}_x u)(t_n,x) 
\end{equation}
of 
$(\operatorname{Hess}_x u)(t_n,x)\in\R^{d\times d},  
$
and for all suitable $\theta\in\R^{\nu}$, 
$x\in\R^d$ and all $n\in\{0,1,\ldots,N-1\}$ 
we think of $\A^{\theta}_n(x)\in\R^d$ 
as an appropriate approximation  
\begin{equation}\label{eq:GeneralAppropriateApproximationForA}
\A^{\theta}_n(x)
\approx
\bigl(
\cL(\nabla_x u)
\bigr)
(t_n,x)
\end{equation}
of
$ 
\bigl(
\cL(\nabla_x u)
\bigr)
(t_n,x)
\in\R^d
$.

\subsection{Stochastic gradient descent-type optimization}
\label{sec:SGD}

As in Subsection~\ref{subsec:algo_sgd} 
we intend to reach 
a \emph{suitable} $\theta\in\R^{\nu}$ 
in 
\eqref{eq:GeneralAppropriateApproximationForY}--\eqref{eq:GeneralAppropriateApproximationForA}
by applying a minimization algorithm 
to the function 
\begin{align}\label{eq:general_mean_square_loss}
 \R^{\nu} \ni \theta \mapsto \E\big[ | \Y^{\theta}_N-g(\X_n) |^2 \big] \in \R. 
\end{align}
Applying a stochastic gradient descent-based 
minimization algorithm yields under suitable assumptions 
random approximations 
$\Theta_m \colon\Omega\to\R^{\nu}$, $m\in\N_0$, 
of a local minimum point of the function in 
\eqref{eq:general_mean_square_loss}. 
For sufficiently large 
$N, \nu, m \in \N$ we use under suitable hypotheses the random function 
\begin{equation}
  \U^{ \Theta_m } \colon \Omega \to C(\R^d,\R)
\end{equation}
as an appropriate 
approximation of the function 
\begin{equation}
  \R^d \ni x \mapsto u(0,x) \in \R
  .
\end{equation}
A more detailed 
description is provided in the next subsection.

\subsection{Framework for the algorithm in the general case}
\label{subsec:general_case}

In this subsection we provide a general framework 
(see Framework~\ref{def:general_algorithm} below)
which covers the deep 2BSDE method 
derived in Subsections~\ref{subsec:details_fullyNonlinear}--\ref{sec:SGD}. 
The variant of the deep 2BSDE method described in Subsection~\ref{subsec:specific_case} 
still remains the core idea of Framework~\ref{def:general_algorithm}. 
However, Framework~\ref{def:general_algorithm} allows more general It\^o processes 
as background dynamics 
(see \eqref{eq:lemma_SDEforX}, \eqref{eq:mergedForm_SDEforX}, \eqref{eq:X_approximate_derivation}, and \eqref{eq:notes_Xapprox} above
and \eqref{eq:FormalXapprox} below)
than just Brownian motion (see Framework~\ref{def:specific_case} in Subsection~\ref{subsec:specific_case} above),
Framework~\ref{def:general_algorithm} allows to incorporate other minimization algorithms 
(cf.\ \eqref{eq:general_gradient_step} below and, e.g., E, Han, \& Jentzen~\cite[Subsections~3.2, 5.1, and 5.2]{EHanJentzen2017})
such as the Adam optimizer
(cf.\ Kingma \& Ba~\cite{KingmaBa2015} and \eqref{eq:examples_setting_moment_estimation}--\eqref{eq:examples_setting_adam_grad_update} below)
than just the plain vanilla stochastic gradient 
descent method (see, e.g., \eqref{eq:specific_gradient}--\eqref{eq:sgd_step} 
in Framework~\ref{def:specific_case} in Subsection~\ref{subsec:specific_case} above), 
and 
Framework~\ref{def:general_algorithm} allows 
to deploy more sophisticated machine learning techniques like batch normalization 
(cf.\ Ioffe \& Szegedy~\cite{IoffeSzegedy2015}
and \eqref{eq:general_batch_normalization} below).
In Section~\ref{sec:examples} below we illustrate the general description in Framework~\ref{def:general_algorithm}
by several examples.

\begin{algo}[General Case]
\label{def:general_algorithm}
Let 
  $T \in (0,\infty)$,  
  $N, d, \varrho, \varsigma, \nu \in \N$, 
let
  $f\colon[0,T]\times\R^d\times\R\times\R^d\times\R^{d\times d}\to\R$ 
  and $g\colon\R^d\to\R$ be functions,
let 
$ 
  ( \Omega, \F, \P, ( \bF_t )_{ t \in [0,T] } ) 
$ 
be a filtered probability space, 
let 
$
  W^{m,j} \colon [0,T] \times \Omega \to \R^d 
$, 
$
  m \in \N_0
$, 
$ 
  j \in \N
$,
be independent standard $ ( \bF_t )_{ t \in [0,T] } $-Brownian motions on 
$(\Omega,\F,\P)$, 
let 
$
  \xi^{m,j}\colon\Omega\to\R^d
$, 
$
  m \in \N_0 
$, 
$ j \in \N $,
be i.i.d.\ $ \bF_0 $/$ \B(\R^d) $-measurable functions,
let
  $t_0, t_1, \ldots, t_N\in [0,T]$ be 
  real numbers with 
  $0 = t_0 < t_1 < \ldots < t_N = T$, 
let  
  $H\colon [0,T]^2\times\R^d\times\R^d\to\R^d$ 
  and
  $\sigma\colon\R^d\to\R^{d\times d}$ 
  be functions,
for every $\theta\in\R^{\nu}$ 
let 
  $ \U^{\theta}\colon\R^d \to \R $ and 
  $ \V^{\theta}\colon\R^d \to\R^d $ 
  be functions, 
for every 
  $ m \in \N_0 $, $ j \in \N $ 
  let 
  $\X^{m,j}\colon \{0,1,\ldots,N\}\times\Omega\to\R^d$ 
  be a stochastic process which satisfies for all 
  $n\in\{0,1,\ldots,N-1\}$ that 
  $\X^{m,j}_0 = \xi^{ m, j } $ and 
  \begin{equation}\label{eq:FormalXapprox}
   \X^{m,j}_{n+1} 
   = 
   H(t_n,t_{n+1},\X^{m,j}_n,W^{m,j}_{t_{n+1}}-W^{m,j}_{t_n}), 
  \end{equation}  
for every 
  $\theta\in\R^{\nu}$, 
  $ j \in \N $, 
  ${\bf s}\in\R^{\varsigma}$, 
  $n\in\{0,1,\ldots,N-1\}$
  let $\G^{\theta,j,{\bf s}}_n\colon(\R^d)^{\N_0}\to\R^{d\times d}$ 
  and $\A^{\theta,j,{\bf s}}_n\colon(\R^d)^{\N_0}\to\R^d$ 
  be functions, 
for every
  $\theta\in\R^{\nu}$, 
  $ m \in \N_0 $,
  $ j \in \N $,
  ${\bf s}\in\R^{\varsigma}$ 
  let $\Y^{\theta,m,j,{\bf s}}\colon \{0,1,\ldots,N\}\times\Omega\to\R$ 
  and $\cZ^{\theta,m,j,{\bf s}}\colon \{0,1,\ldots,N\}\times\Omega\to\R^{d}$ 
  be stochastic processes which satisfy  
  $\Y^{\theta,m,j,{\bf s}}_0 = \U^{\theta}(\xi^{ m, j } )$ and 
  $\cZ^{\theta,m,j,{\bf s}}_0 = \V^{\theta}(\xi^{ m, j } )$ 
  and which satisfy for all $n\in\{0,1,\ldots, N-1\}$ 
  that
  \begin{equation}\label{eq:FormalYapprox}
   \begin{split}
   & \Y^{\theta,m,j,{\bf s}}_{n+1} 
   = 
   \Y^{\theta,m,j,{\bf s}}_n  
   + 
   (t_{n+1}-t_n)\, \Bigl[\tfrac12\operatorname{Trace}\Bigl(
   \sigma(\X^{m,j}_n)\sigma(\X^{m,j}_n)^{\ast} 
   \G^{\theta,j,{\bf s}}_n\bigl((\X^{m,i}_n)_{ i \in \N }\bigr)
   \Bigr)
   \\
   & + 
   f\bigl(t_n,\X^{m,j}_n,\Y^{\theta,m,j}_n,\cZ^{\theta,m,j,{\bf s}}_n,
   \G^{\theta,j,{\bf s}}_n\bigl((\X^{m,i}_n)_{ i \in \N }\bigr)\bigr) 
   \Bigr]
   + 
   \langle
    \cZ^{\theta,m,j,{\bf s}}_n, 
    \X^{m,j}_{n+1}-\X^{m,j}_n
   \rangle_{\R^d}
  \end{split}
 \end{equation}
 and
 \begin{equation}
   \cZ^{\theta,m,j,{\bf s}} 
   = \A^{\theta,j,{\bf s}}_n\bigl((\X^{m,i}_n)_{ i \in \N }\bigr)\,(t_{n+1}-t_n) 
   + \G^{\theta,j,{\bf s}}_n\bigl((\X^{m,i}_n)_{ i \in \N }\bigr)\,(\X^{m,j}_{n+1}-\X^{m,j}_n), 
  \label{eq:FormalZapprox}
 \end{equation}
let $(J_m)_{m\in\N_0}\subseteq\N$ be a sequence, 
for every $m\in\N_0$, ${\bf s}\in\R^{\varsigma}$ 
  let 
  $\phi^{m,{\bf s}}\colon\R^{\nu}\times\Omega\to\R$ be the function which 
  satisfies for all 
  $(\theta,\omega)\in\R^{\nu}\times\Omega$ that 
  \begin{align}\label{eq:general_functionToMinimize}
   \phi^{m,{\bf s}}(\theta,\omega) = \frac{1}{J_m}\sum_{j=1}^{J_m}
   \big|
     \Y^{\theta,m,j,{\bf s}}_N(\omega)-g(\X^{m,j}_N(\omega))
   \big|^2,
  \end{align}
for every $m\in\N_0$, ${\bf s}\in\R^{\varsigma}$ let 
  $\Phi^{m,{\bf s}}\colon\R^{\nu}\times\Omega\to\R^{\nu}$ a function 
  which satisfies for all $\omega\in\Omega$,  
  $\theta\in\{\eta\in\R^{\nu}\colon \phi^{m,{\bf s}}(\cdot,\omega)\colon\R^{\nu}\to\R~\text{is differentiable at}~\eta\}$ 
  that
  \begin{align}
   \Phi^{m,{\bf s}}(\theta,\omega) = (\nabla_{\theta}\phi^{m,{\bf s}})(\theta,\omega),
  \end{align}
let $\S\colon\R^{\varsigma}\times\R^{\nu}\times(\R^d)^{\{0,1,\ldots,N-1\}\times\N}\to\R^{\varsigma}$ 
  be a function, 
for every $m\in\N_0$
  let $\psi_m\colon\R^{\varrho}\to\R^{\nu}$ 
  and $\Psi_m\colon\R^{\varrho}\times\R^{\nu}\to\R^{\varrho}$
  be functions, 
let 
  $\Theta\colon\N_0\times\Omega\to\R^{\nu}$, 
  $\bS\colon\N_0\times\Omega\to\R^{\varsigma}$, 
  and 
  $\Xi\colon\N_0\times\Omega\to\R^{\varrho}$ 
  be stochastic processes 
  which satisfy for all $m\in\N_0$ that 
  \begin{equation}\label{eq:general_batch_normalization} 
   \bS_{m+1} = \S\bigl(\bS_m, \Theta_{m}, 
   (\X_n^{m,i})_{(n,i)\in\{0,1,\ldots,N-1\}\times\N}\bigr),  
  \end{equation}
  \begin{equation}
   \Xi_{m+1} = \Psi_{m}(\Xi_{m},\Phi^{m,\bS_{m+1}}(\Theta_m)),  
   \qquad
   \text{and}
   \qquad
   \Theta_{m+1} = \Theta_{m} - \psi_{m}(\Xi_{m+1}) 
   \label{eq:general_gradient_step}. 
  \end{equation}
\end{algo}

Under suitable further assumptions, we think in 
the case of sufficiently large 
  $N, \nu\in\N$, 
  $m\in\N_0$ 
in Framework \ref{def:general_algorithm}
of the random variable 
$\Theta_m\colon\Omega\to\R^{\nu}$ 
as an appropriate approximation of
a local minimum point of the expected loss function and 
we think in the case of sufficiently large 
  $N, \nu\in\N$, 
  $m\in\N_0$ 
in Framework \ref{def:general_algorithm} 
of the random function 
\begin{equation}
 \R^d \ni x \mapsto \U^{\Theta_m}(x,\omega) \in \R^d 
\end{equation}
for $\omega\in\Omega$ as an 
appropriate approximation of the function 
\begin{equation}
 \R^d \ni x \mapsto u(0,x) \in\R
\end{equation}
where $u\colon [0,T]\times\R^d\to\R$ 
is an at most polynomially growing continuous function which 
satisfies for all $(t,x)\in [0,T)\times\R^d$ that 
$u|_{[0,T)\times\R^d}\in C^{1,2}([0,T)\times\R^d)$, 
$u(T,x) = g(x)$, and 
\begin{align}\label{eq:general_terminal_value_problem}
 \tfrac{\partial u}{\partial t}(t,x) 
 = 
 f\bigl(t, x, u(t,x), (\nabla_x u)(t,x), (\operatorname{Hess}_x u)(t,x)\bigr)
\end{align}
(cf. \eqref{eq:specific_fullyNonlinearPDE} in Subsection~\ref{subsec:specific_case}). 
This terminal value problem 
can in a straight-forward manner be transformed into an initial value problem. 
This is the subject of the following elementary remark. 

\begin{remark}\label{rem:initial_vs_terminal_value}
Let 
  $d\in\N$, 
  $T\in (0,\infty)$, 
let 
  $f\colon [0,T]\times\R^d\times\R\times\R^d\times\R^{d\times d}\to\R$
  and 
  $g\colon \R^d\to\R$ 
  be functions, 
let 
  $u\colon [0,T]\times\R^d\to\R$ 
  be a continuous function which satisfies for 
  all $(t,x)\in [0,T)\times\R^d$ that 
  $u(T,x) = g(x)$, 
  $u|_{[0,T)\times\R^d}\in C^{1,2}([0,T)\times\R^d,\R)$, 
  and 
  \begin{align}\label{eq:fullyNonlinearPDEInitialValueProblemForLittleU}
   \tfrac{\partial u}{\partial t}(t,x) 
   = 
   f\bigl( t, x, u(t,x), (\nabla_x u)(t,x), (\operatorname{Hess}_x u)(t,x) \bigr), 
  \end{align}
and 
let 
  $F\colon [0,T]\times\R^d\times\R\times\R^d\times\R^{d\times d}\to\R$ 
  and 
  $U\colon [0,T]\times\R^d\to\R$ 
  be the functions which satisfy for all 
  $(t,x,y,z,\gamma)\in [0,T]\times\R^d\times\R\times\R^d\times\R^{d\times d}$ that 
  $U(t,x) = u(T-t,x)$ and
  \begin{align}\label{eq:definitionOfF}
  F\bigl(t,x,y,z,\gamma\bigr) = -f\bigl(T-t, x,y,z,\gamma\bigr). 
  \end{align}
Then it holds that $U\colon [0,T]\times\R^d\to\R$ is a continuous function 
which satisfies for all 
$(t,x)\in (0,T]\times\R^d$ that 
$ U(0,x) = g(x)$, $U|_{(0,T]\times\R^d}\in C^{1,2}((0,T]\times\R^d,\R)$, 
and 
\begin{align}
 \label{eq:fullyNonlinearPDEInitialValueProblem}
  \tfrac{\partial U}{\partial t}(t,x) 
  = 
  F\bigl(t, x, U(t,x), (\nabla_x U)(t,x), (\operatorname{Hess}_x U)(t,x)\bigr).
\end{align}
\end{remark}
\begin{proof}[Proof of Remark~\ref{rem:initial_vs_terminal_value}]
First, note that the hypothesis that $u\colon [0,T]\times\R^d\to\R$ 
is a continuous function ensures that $U\colon [0,T]\times\R^d\to\R$ 
is a continuous function. Next, note that for all $x\in\R^d$ it holds that 
\begin{equation}\label{eq:InitialConditionForU}
 U(0,x) = u(T,x) = g(x). 
\end{equation}
Moreover, observe that the chain rule, 
\eqref{eq:fullyNonlinearPDEInitialValueProblemForLittleU}, and 
\eqref{eq:definitionOfF} ensure that for all  
$(t,x)\in (0,T]\times\R^d$ it holds that  
$U|_{(0,T]\times\R^d}\in C^{1,2}((0,T]\times\R^d,\R)$, $U(0,x)=g(x)$, and 
\begin{align}\label{eq:transformationCalculated}
 \tfrac{\partial U}{\partial t}(t,x) 
 & = \tfrac{\partial}{\partial t} \bigl[ u(T-t,x) \bigr]
 = -(\tfrac{\partial u}{\partial t}) ( T-t, x ) \nonumber \\
 & = - f\bigl(T-t, x, u(T-t,x), (\nabla_x u)(T-t,x), (\operatorname{Hess}_x u)(T-t,x) \bigr) \nonumber \\
 & = - f\bigl(T-t, x, U(t,x), (\nabla_x U)(t,x), (\operatorname{Hess}_x U)(t,x) \bigr) \nonumber \\
 & = F\bigl(t, x, U(t,x), (\nabla_x U)(t,x), (\operatorname{Hess}_x U)(t,x) \bigr).  
\end{align}
Combining the fact that $U\colon [0,T]\times\R^d\to\R$ is a continuous function 
with \eqref{eq:InitialConditionForU} and \eqref{eq:transformationCalculated} 
completes the proof of Remark~\ref{rem:initial_vs_terminal_value}.
\end{proof}

\section{Examples}\label{sec:examples}
 In this section we employ the deep 2BSDE method 
(see Framework~\ref{def:specific_case} and Framework~\ref{def:general_algorithm} above)
to approximate the solutions of several example PDEs 
such as Allen-Cahn equations, 
a Hamilton-Jacobi-Bellman (HJB) equation, 
a Black-Scholes-Barenblatt equation,
and nonlinear expectations of $ G $-Brownian motions. 
More specifically, 
in Subsection~\ref{subsec:example_allen_cahn_plain_sgd_no_bn} 
we employ an implementation of the deep 2BSDE method in Framework~\ref{def:specific_case} 
to approximate a $20$-dimensional Allen-Cahn equation, 
in Subsection~\ref{subsec:example_bsb}
we employ an implementation of the deep 2BSDE method in Framework~\ref{def:general_algorithm} 
to approximate a $ 100 $-dimensional Black-Scholes-Barenblatt equation, 
in Subsection~\ref{subsec:example_square_gradient}
we employ an implementation of the deep 2BSDE method in Framework~\ref{def:general_algorithm} 
to approximate 
a $ 100 $-dimensional Hamilton-Jacobi-Bellman equation, 
in Subsection~\ref{subsec:example_allen_cahn} 
we employ an implementation of the deep 2BSDE method in Framework~\ref{def:general_algorithm}
to approximate 
a $ 50 $-dimensional Allen-Cahn equation,
and 
in Subsection~\ref{subsec:example_gbm} 
we employ implementations of the deep 2BSDE method in
Framework~\ref{def:general_algorithm} to approximate 
nonlinear expectations of $ G $-Brownian motions in $ 1 $ and $ 100 $ space-dimensions. 
The {\sc Python} code used for the implementation of the deep 2BSDE method 
in Subsection~\ref{subsec:example_allen_cahn_plain_sgd_no_bn} can be found in 
Subsection~\ref{subsec:plainSGDCode} below. 
The {\sc Python} code used for the implementation of the deep 2BSDE method 
in Subsection~\ref{subsec:example_bsb} can be found in 
Subsection~\ref{subsec:generalCode} below. 
All of the numerical experiments presented below have been 
performed in {\sc Python 3.6} using {\sc TensorFlow} 1.2 or {\sc TensorFlow} 1.3, respectively, 
on a {\sc Lenovo X1 Carbon} with a $2.40$ Gigahertz (GHz) 
{\sc Intel} i7 microprocessor with $8$ Megabytes (MB) RAM. 
 
\subsection{Allen-Cahn equation with plain gradient descent 
and no batch normalization}
\label{subsec:example_allen_cahn_plain_sgd_no_bn}

In this subsection we use the deep 2BSDE method 
in Framework~\ref{def:specific_case} 
to approximatively calculate the solution of a $20$-dimensional 
Allen-Cahn equation with a cubic nonlinearity 
(see~\eqref{eq:example_allen_cahn_plain} below).

Assume 
  Framework \ref{def:specific_case},  
assume 
  that 
  $T = \tfrac{3}{10}$, 
  $\gamma = \tfrac{1}{1000}$, 
  $d = 20$, 
  $N = 20$, 
  $\xi = 0 \in \R^{20}$, 
  $
  \nu \geq ( 5 N d + N d^2 + 1 ) (d + 1)
  $,
assume for every 
  $ \theta = ( \theta_1, \dots, \theta_{ \nu } ) \in \R^{ \nu }$, 
  $x\in\R^d$ 
that 
  \begin{equation}
   \G^{\theta}_0(x) = 
   \begin{pmatrix}
    \theta_{d+2}  	& \theta_{d+3} 	& \ldots & \theta_{2d + 1} \\
    \theta_{2d+2} 	& \theta_{2d+3}	& \ldots & \theta_{3d + 1} \\
    \vdots 		& \vdots	& \vdots & \vdots \\
    \theta_{d^2+2} 	& \theta_{d^3+3}& \ldots & \theta_{d^2 + d + 1} 
   \end{pmatrix}
   \in\R^{d\times d}
   \quad\text{and}\quad
   \A^{\theta}_0(x) = 
   \begin{pmatrix}
    \theta_{d^2+d+2} \\
    \theta_{d^2+d+3} \\
    \vdots \\ 
    \theta_{d^2+2d+1}
   \end{pmatrix}
    \in\R^d,
  \end{equation}
for every $k\in\N$ let 
  $ \mathcal{R}_{k} \colon \R^k \to \R^k $ be the function 
  which satisfies for all $ x = ( x_1, \dots, x_k ) \in \R^k $ that
  \begin{equation}
    \mathcal{R}_k( x ) 
    =
    \left(
      \max\{ x_1, 0 \}
      ,
      \dots
      ,
      \max\{ x_k, 0 \}
    \right)
    ,
  \end{equation}
  for every 
$ \theta = ( \theta_1, \dots, \theta_{ \nu } ) \in \R^{ \nu } $, 
$ v \in \N_0 $,
$ k, l \in \N $
with 
$
  v + k (l + 1 ) \leq \nu
$
let 
$ M^{ \theta, v }_{ k, l } \colon \R^l \to \R^k $ 
be the affine linear function which satisfies for all 
$ x = ( x_1, \dots, x_l ) $ that
\begin{equation}
 M^{ \theta, v }_{ k, l }( x )
  =
  \left(
    \begin{array}{cccc}
      \theta_{ v + 1 }
    &
      \theta_{ v + 2 }
    &
      \dots
    &
      \theta_{ v + l }
    \\
      \theta_{ v + l + 1 }
    &
      \theta_{ v + l + 2 }
    &
      \dots
    &
      \theta_{ v + 2 l }
    \\
      \theta_{ v + 2 l + 1 }
    &
      \theta_{ v + 2 l + 2 }
    &
      \dots
    &
      \theta_{ v + 3 l }
    \\
      \vdots
    &
      \vdots
    &
      \vdots
    &
      \vdots
    \\
      \theta_{ v + ( k - 1 ) l + 1 }
    &
      \theta_{ v + ( k - 1 ) l + 2 }
    &
      \dots
    &
      \theta_{ v + k l }
    \end{array}
  \right)
  \left(
    \begin{array}{c}
      x_1
    \\
      x_2
    \\
      x_3
    \\
      \vdots 
    \\
      x_l
    \end{array}
  \right)
  +
  \left(
    \begin{array}{c}
      \theta_{ v + k l + 1 }
    \\
      \theta_{ v + k l + 2 }
    \\
      \theta_{ v + k l + 3 }
    \\
      \vdots 
    \\
      \theta_{ v + k l + k }
    \end{array}
  \right),
\end{equation}
assume for all 
$ \theta \in \R^{ \nu } $,
$ n \in \{ 1,2,\ldots, N-1 \} $,
$ x \in \R^d $
that
\begin{equation}
   \A^{ \theta }_n
    =
    M^{ \theta, [ ( 2N + n ) d + 1] ( d + 1 ) }_{ d, d } 
    \circ 
    \mathcal{R}_d
    \circ 
    M^{ \theta, [ ( N + n ) d + 1] ( d + 1 ) }_{ d, d } \\
    \circ 
    \mathcal{R}_d
    \circ 
    M^{ \theta, (n d + 1) ( d + 1 ) }_{ d, d } 
\end{equation}
and
\begin{equation}
  \G^{ \theta }_n
    =
    M^{ \theta, ( 5 N d + n d^2 + 1 ) ( d + 1 ) }_{ d^2, d } 
    \circ 
    \mathcal{R}_d
    \circ 
    M^{ \theta, [ ( 4 N + n) d + 1 ] ( d + 1 ) }_{ d, d }
    \circ 
    \mathcal{R}_d
    \circ 
    M^{ \theta, [ ( 3 N + n) d  + 1 ] ( d + 1 ) }_{ d, d },
\end{equation}
assume for all 
  $i\in\{0,1,\ldots,N\}$, 
  $\theta\in\R^{\nu}$, 
  $t\in [0,T)$, 
  $x,z\in\R^d$, 
  $y\in\R$,  
  $S\in\R^{d\times d}$
  that 
  $t_i = \tfrac{iT}{N}$, 
  $g(x) = \bigl[2+\tfrac{2}{5}\|x\|_{\R^d}^2\bigr]^{-1}$, 
  and 
  \begin{align}
   f\bigl(t,x,y,z,S\bigr) = -\tfrac12\operatorname{Trace}(S) - y + y^3, 
  \end{align}
and let $u\colon [0,T]\times\R^d\to\R$ be an at most polynomially 
growing continuous function which satisfies for all $(t,x)\in [0,T)\times\R^d$ 
that $u(T,x) = g(x)$, 
$u|_{[0,T)\times\R^d}\in C^{1,2}([0,T)\times\R^d,\R)$, 
and 
\begin{align}\label{eq:example_plain_fully_nonlinear_PDE}
 \tfrac{\partial u}{\partial t}(t,x) 
 = 
 f(t,x,u(t,x),(\nabla_x u)(t,x), (\operatorname{Hess}_x u)(t,x)). 
\end{align}
The solution $u\colon [0,T]\times\R^d\to\R$ 
of the PDE~\eqref{eq:example_plain_fully_nonlinear_PDE} 
satisfies for all $(t,x)\in [0,T)\times\R^d$ that 
$u(T,x) = [2+\tfrac25 \|x\|_{\R^d}]^{-1}$ and 
\begin{align}\label{eq:example_allen_cahn_plain}
 \tfrac{\partial u}{\partial t}(t,x) 
 + \tfrac12(\Delta_x u)(t,x) 
 + u(t,x) - [u(t,x)]^3 
 = 0. 
\end{align}
In Table~\ref{tab:table_AllenCahnPlainSGD.tex} 
we use 
{\sc Python} code~\ref{code:deepPDEmethodPlainSGDNoBN}
in Subsection~\ref{subsec:plainSGDCode} below
to approximatively calculate 
  the mean of $\Theta^{(1)}_m$, 
  the standard deviation of $\Theta^{(1)}_m$, 
  the relative $ L^1 $-approximation error associated to $\Theta^{(1)}_m$, 
  the uncorrected sample standard deviation of the relative approximation error associated to $ \Theta^{(1)}_m $, 
  the mean of the loss function associated to $\Theta_m$, 
  the standard deviation of the loss function associated to $\Theta_m$, 
  and the average runtime in seconds needed for calculating one realization of $\Theta^{(1)}_m$ 
against $m\in\{0,1000,2000,3000,4000,5000\}$ based on $10$ independent realizations 
(10 independent runs of {\sc Python} code~\ref{code:deepPDEmethodPlainSGDNoBN}
in Subsection~\ref{subsec:plainSGDCode} below). 
In addition, Figure~\ref{fig:figure_AllenCahnPlain} depicts approximations of the relative $ L^1 $-approximation error 
and approximations of the mean of the loss function associated to $ \Theta^{(1)}_m $ against 
$ m \in \{ 0, 1, 2, \ldots, 5000 \} $ 
based on $10$ independent realizations 
(10 independent runs of {\sc Python} code~\ref{code:deepPDEmethodPlainSGDNoBN}
in Subsection~\ref{subsec:plainSGDCode} below). 
In the approximative calculations of the relative $ L^1 $-approximation errors 
in Table~\ref{fig:figure_AllenCahnPlain} and 
Figure~\ref{fig:figure_AllenCahnPlain}
the value 
$ 
  u(0,\xi) 
$ 
of the solution $ u $ of the PDE~\eqref{eq:example_allen_cahn_plain} 
has been replaced by the value $ 0.30879 $ 
which, in turn, has been calculated through the Branching diffusion method 
(see {\sc Matlab} code~\ref{code:branchingMatlab} in Appendix~\ref{subsec:BranchingMatlab} below). 

\begin{center}
\begin{table}
\begin{center}
\begin{tabular}{|c|c|c|c|c|c|c|c|}
\hline
Number&Mean&Standard&Rel.\ $L^1$-&Standard&Mean&Standard&Runtime\\
of&of $ \mathcal{U}^{ \Theta_m } $&deviation&approx.&deviation&of the&deviation&in sec.\\
iteration&&of $ \mathcal{U}^{ \Theta_m } $&error&of the&loss&of the&for one \\
steps &&&&relative&function&loss&realiz.\\
&&&&approx.&&function&of $ \mathcal{U}^{ \Theta_m } $\\
&&&&error&&&\\
\hline
    0 & 
-0.02572 & 
0.6954 & 
2.1671 & 
1.24464 & 
0.50286 & 
0.58903 & 
3\\
 1000 & 
0.19913 & 
0.1673 & 
0.5506 & 
0.34117 & 
0.02313 & 
0.01927 & 
6\\
 2000 & 
0.27080 & 
0.0504 & 
0.1662 & 
0.11875 & 
0.00758 & 
0.00672 & 
8\\
 3000 & 
0.29543 & 
0.0129 & 
0.0473 & 
0.03709 & 
0.01014 & 
0.01375 & 
11\\
 4000 & 
0.30484 & 
0.0054 & 
0.0167 & 
0.01357 & 
0.01663 & 
0.02106 & 
13\\
 5000 & 
0.30736 & 
0.0030 & 
0.0093 & 
0.00556 & 
0.00575 & 
0.00985 & 
15\\
\hline
\end{tabular}
\end{center}
\caption{
Numerical simulations of the deep2BSDE method in 
Framework 
\ref{def:specific_case} in the case of 
the $20$-dimensional Allen-Cahn equation 
\eqref{eq:example_allen_cahn_plain}
(cf.\ {\sc Python} code \ref{code:deepPDEmethodPlainSGDNoBN}
in Subsection \ref{subsec:plainSGDCode} below).
In the approximative calculations of the 
relative $L^1$-approximation errors 
the value $u(0, \xi)$ has been replaced by the value 0.30879 which has been
calculated through the Branching diffusion method 
(cf.\ {\sc Matlab} code 
\ref{code:branchingMatlab} in Subsection 
\ref{subsec:BranchingMatlab} below).
\label{tab:table_AllenCahnPlainSGD.tex}}
\end{table}
\end{center}

\begin{figure}
\includegraphics{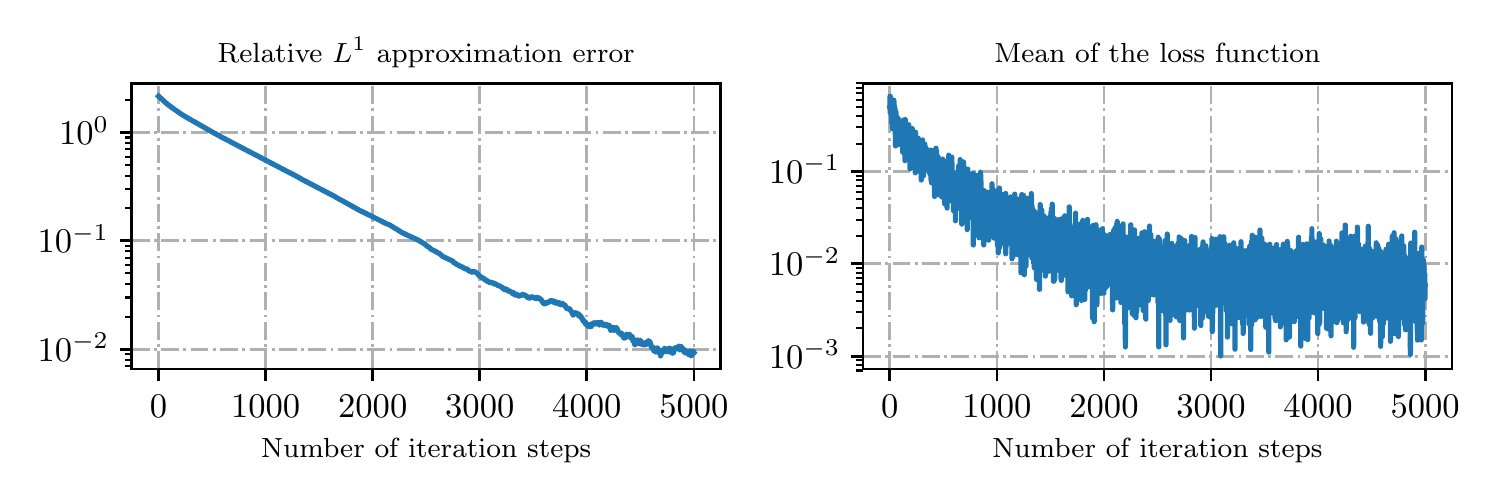}
\input{AllenCahnPlain_Caption}
\end{figure}

\subsection[Setting for the deep 2BSDE method with batch normalization and Adam]{Setting for the deep 2BSDE method 
with batch normalization and the Adam optimizer}
\label{subsec:example_setting}
Assume Framework~\ref{def:general_algorithm}, 
let
  $\varepsilon\in (0,\infty)$, 
  $\beta_1 = \tfrac{9}{10}$, 
  $\beta_2 = \tfrac{999}{1000}$, 
  $(\gamma_m)_{m\in\N_0}\subseteq (0,\infty)$, 
let 
  $\operatorname{Pow}_r \colon \R^{\nu}\to\R^{\nu}$, $r\in (0,\infty)$,  
  be the functions which satisfy for all $r\in (0,\infty)$, 
  $x=(x_1,\ldots,x_{\nu})\in\R^{\nu}$ that 
  \begin{align}
   \operatorname{Pow}_r(x) = (|x_1|^r,\ldots,|x_{\nu}|^r), 
  \end{align}
let 
  $u \colon [0,T]\times \R^d\to\R$
  be an at most polynomially growing continuous function 
  which satisfies 
  for all $(t,x)\in [0,T)\times \R^d$ that 
  $u(T,x) = g(x)$, 
  $u|_{ [0,T) \times \R^d }\in C^{1,2}([0,T)\times\R^d,\R)$, 
  and  
  \begin{align}\label{eq:example_PDE_general}
   \tfrac{ \partial u }{ \partial t }( t, x )
   = f(t,x,u(t,x),(\nabla_x u)(t,x),(\operatorname{Hess}_x u)(t,x)), 
  \end{align}
assume for all 
$m\in\N_0$, 
$i\in\{0,1,\ldots,N\}$
that 
$ J_m = 64 $,
$ t_i = \tfrac{iT}{N} $, 
and 
$ \varrho = 2 \nu $, 
and assume 
  for all $m\in\N_0$, $x=(x_1,\ldots,x_{\nu}),~y=(y_1,\ldots,y_{\nu})\in\R^{\nu}$, 
  $\eta = ( \eta_1 , \ldots , \eta_{\nu} )\in \R^{\nu}$ 
  that 
  \begin{align}\label{eq:examples_setting_moment_estimation}
   \Psi_m ( x , y , \eta ) 
   = 
   (\beta_1 x + (1-\beta_1) \eta, \beta_2 y + (1-\beta_2) \operatorname{Pow}_2(\eta))
  \end{align}
  and 
  \begin{align}\label{eq:examples_setting_adam_grad_update}
   \psi_m ( x,y ) = 
   \biggl(
   \Bigl[
   \sqrt{\tfrac{|y_1|}{1-\beta_2^m}} + \varepsilon
   \Bigr]^{-1}
   \frac{\gamma_m x_{1}}{1-\beta_1^m},
   \ldots, 
   \Bigl[
   \sqrt{\tfrac{|y_{\nu}|}{1-\beta_2^m}} + \varepsilon
   \Bigr]^{-1}
   \frac{\gamma_m x_{\nu}}{1-\beta_1^m}
   \biggr). 
  \end{align}

\begin{remark} 
Equations~\eqref{eq:examples_setting_moment_estimation} and \eqref{eq:examples_setting_adam_grad_update} 
describe the Adam optimizer; 
cf.~Kingma~\& Ba \cite{KingmaBa2015} 
and lines~181--186 in {\sc Python} code~\ref{code:deepPDEmethod}
in Subsection~\ref{subsec:generalCode} below.
The default choice 
in {\sc TensorFlow} 
for the real number $ \varepsilon \in (0,\infty) $ 
in \eqref{eq:examples_setting_adam_grad_update}
is $ \varepsilon = 10^{-8} $ 
but according to the comments 
in the file {\ttfamily adam.py} 
in {\sc TensorFlow} 
there are situations in which other choices may be more appropriate. 
In Subsection \ref{subsec:example_allen_cahn} we took $\varepsilon=1$ 
(in which case one has to add the argument {\ttfamily epsilon=1.0}
to {\ttfamily tf.train.AdamOptimizer} in lines 181--183 
in {\sc Python} code \ref{code:deepPDEmethod} in Subsection \ref{subsec:generalCode} below) 
whereas we used the default value 
$\varepsilon = 10^{-8}$ in 
Subsections 
\ref{subsec:example_bsb},
\ref{subsec:example_square_gradient},
and 
\ref{subsec:example_gbm}.
\end{remark}

\subsection{A $100$-dimensional Black-Scholes-Barenblatt equation}
\label{subsec:example_bsb}
In this subsection we use the deep 2BSDE method 
in Framework \ref{def:general_algorithm} to approximatively 
calculate the solution of a $100$-dimensional Black-Scholes-Barenblatt equation 
(see 
Avellaneda, Levy, \& Par\'as~\cite{AvellanedaLevyParas1995} 
and \eqref{eq:example_bsb} below). 

Assume the setting of  
Subsection~\ref{subsec:example_setting}, 
assume 
  $d=100$, 
  $T=1$, 
  $N=20$, 
  $\varepsilon = 10^{-8}$, 
assume for all $\omega\in\Omega$ that 
  $\xi(\omega) = (1,\nicefrac12,1,\nicefrac12,\ldots,1,\nicefrac12)\in\R^d$,
let 
 $r = \frac{ 5 }{ 100 } $, 
 $\sigma_{\text{max}} = \frac{ 4 }{ 10 } $, 
 $\sigma_{\text{min}} = \frac{ 1 }{ 10 } $,  
 $\sigma_c = \frac{ 4 }{ 10 } $, 
let 
 $\bar\sigma\colon\R\to\R$ be the function which satisfies 
 for all $x\in\R$ that  
 \begin{align}
  \bar\sigma(x) = 
  \begin{cases}
   \sigma_{\text{max}} & \colon x\geq 0 \\
   \sigma_{\text{min}} & \colon x < 0
  \end{cases},
 \end{align}
assume for all 
 $s,t\in [0,T]$, 
 $x=(x_1,\ldots,x_d), 
 w=(w_1,\ldots,w_d), 
 z=(z_1,\ldots,z_d)\in\R^d$,
 $y\in\R$, 
 $S=(S_{ij})_{(i,j)\in\{1,\ldots,d\}^2}\in\R^{d\times d}$
that 
 $\sigma(x) = \sigma_c \text{diag}(x_1,\ldots,x_d)$, 
 $H(s,t,x,w) = x + \sigma(x)w$,
 $g(x) = \|x\|_{\R^d}^2$, 
 and 
 \begin{align}
  f(t,x,y,z,S) 
  = 
  -\frac12\sum_{i=1}^d 
  | x_i |^2
  \,
  | 
    \bar\sigma( S_{ii} )
  |^2 
  S_{ii}
  + r(y - \langle x, z\rangle_{\R^d}).
 \end{align}
The solution $u\colon [0,T]\times\R^d\to\R$ of the 
PDE~\eqref{eq:example_PDE_general} then
satisfies for all $(t,x)\in [0,T)\times\R^d$ that 
$u(T,x) = \|x\|_{\R^d}^2$ and 
\begin{equation}
\label{eq:example_bsb}
  \tfrac{\partial u}{\partial t}(t,x)    
  + \tfrac12
  {\textstyle 
  \sum\limits_{i=1}^d
  }
  | x_i |^2
  \big|
    \bar\sigma\big(
      \tfrac{ \partial^2 u }{ \partial x_i^2 }( t, x) 
    \big)
  \big|^2
  \tfrac{\partial^2 u}{\partial x_i^2}(t,x)
  =
  r \bigl(u(t,x)-\langle x, (\nabla_x u)(t,x)\rangle_{\R^d}\bigr). 
\end{equation}
\begin{center}
\begin{table}
\begin{center}
\begin{tabular}{|c|c|c|c|c|c|c|c|}
\hline
Number&Mean&Standard&Rel.\ $L^1$-&Standard&Mean&Standard&Runtime\\
of&of $ \mathcal{U}^{ \Theta_m } $&deviation&approx.&deviation&of the&deviation&in sec.\\
iteration&&of $ \mathcal{U}^{ \Theta_m } $&error&of the&empirical&of the&for one \\
steps &&&&relative&loss&empirical&realiz.\\
&&&&approx.&function&loss&of $ \mathcal{U}^{ \Theta_m } $\\
&&&&error&&function&\\
\hline
    0 & 
0.522 & 
0.2292 & 
0.9932 & 
0.00297 & 
5331.35 & 
101.28 & 
25\\
  100 & 
56.865 & 
0.5843 & 
0.2625 & 
0.00758 & 
441.04 & 
90.92 & 
191\\
  200 & 
74.921 & 
0.2735 & 
0.0283 & 
0.00355 & 
173.91 & 
40.28 & 
358\\
  300 & 
76.598 & 
0.1636 & 
0.0066 & 
0.00212 & 
96.56 & 
17.61 & 
526\\
  400 & 
77.156 & 
0.1494 & 
0.0014 & 
0.00149 & 
66.73 & 
18.27 & 
694\\
\hline
\end{tabular}
\end{center}
\caption{
Numerical simulations of the deep2BSDE method 
in Framework 
\ref{def:general_algorithm} in the case of 
the $100$-dimensional
Black-Scholes-Barenblatt equation 
\eqref{eq:example_bsb}
(cf.\ {\sc Python} code \ref{code:deepPDEmethod}
in Subsection \ref{subsec:generalCode} below).
In the approximative calculations of the relative 
$L^1$-approximation errors the value $u(0,(1,\nicefrac12,1,\nicefrac12,\ldots,1,\nicefrac12))$ 
has been replaced by the value 77.1049
which has been calculated by means of Lemma 
\ref{lem:bsb_analytic}.
\label{tab:table_BSB.tex}}
\end{table}
\end{center}
\begin{figure}
 \includegraphics{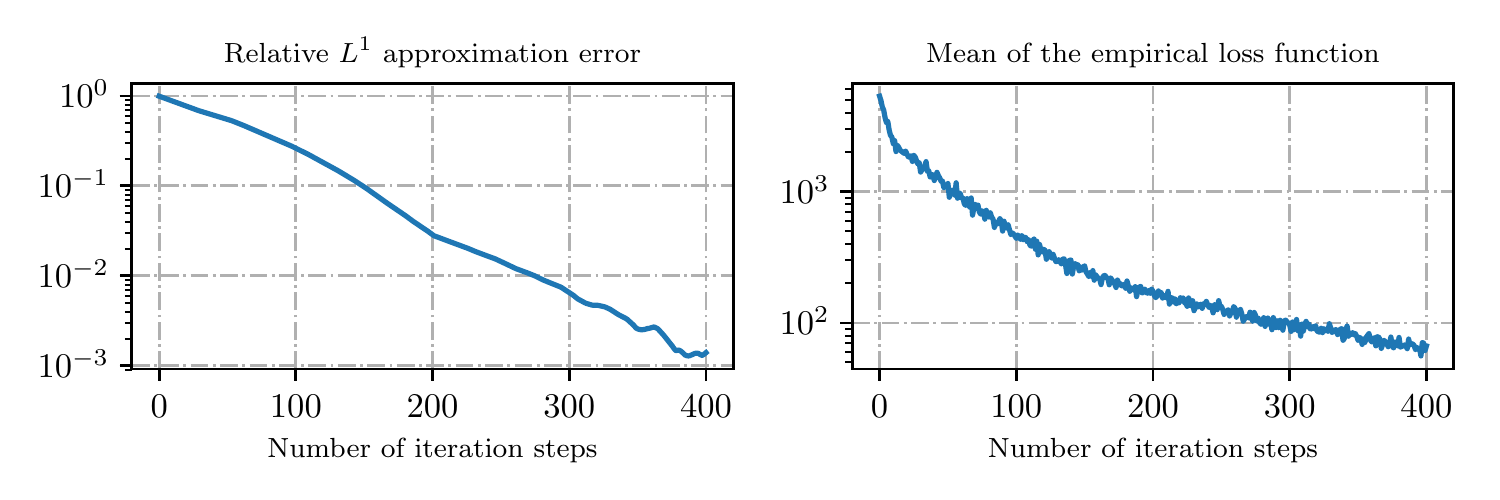}
 \input{BSB_Caption} 
\end{figure}
In Table~\ref{tab:table_BSB.tex} 
we use {\sc Python} code \ref{code:deepPDEmethod} 
in Subsection \ref{subsec:generalCode} below 
to approximatively calculate  
  the mean $\U^{\Theta_m}(\xi)$, 
  the standard deviation of $\U^{\Theta_m}(\xi)$, 
  the relative $L^1$-approximation error associated to $\U^{\Theta_m}(\xi)$, 
  the uncorrected sample standard deviation of the relative approximation error associated to $\U^{\Theta_m}(\xi)$, 
  the mean of the empirical loss function associated to $\Theta_m$, 
  the standard deviation of the empirical loss function associated to $\Theta_m$, 
  and the average runtime in seconds needed for calculating one realization of $\U^{\Theta_m}(\xi)$ 
against $m\in\{0,100,200,300,400\}$ based on $10$ realizations 
($10$ independent runs of {\sc Python} code~\ref{code:deepPDEmethod} 
in Subsection \ref{subsec:generalCode} below).
In addition, Figure \ref{fig:figure_bsb} depicts approximations of 
the relative $L^1$-approximation error 
and approximations of the mean of the empirical loss function 
associated to $\Theta_m$ 
against $m\in \{0,1,\ldots,400\}$ based on $10$ independent 
realizations ($10$ independent runs of {\sc Python} code 
\ref{code:deepPDEmethod}). 
In the approximative calculations of the relative 
$L^1$-approximation errors in Table~\ref{tab:table_BSB.tex} 
and Figure~\ref{fig:figure_bsb} the value 
$u(0,(1,\nicefrac12,1,\nicefrac12,\ldots,1,\nicefrac12))$ of the solution $u$ of the PDE 
\eqref{eq:example_bsb} has been replaced by the value 
$77.1049$ which, in turn, has been calculated 
by means of Lemma \ref{lem:bsb_analytic} below. 
\begin{lemma}
\label{lem:bsb_analytic}
Let $c,\sigma_{\max},r, T\in (0,\infty)$, $\sigma_{\min}\in (0,\sigma_{\max})$, $d\in\N$, 
let $\bar\sigma\colon\R\to\R$ be the function which satisfies 
for all $x\in\R$ that 
\begin{equation}
\label{eq:def_bar_sigma}
 \bar{\sigma}(x) = \begin{cases}
                    \sigma_{\max} & \colon x\geq 0 \\
                    \sigma_{\min} & \colon x < 0
                   \end{cases},
\end{equation}
and let 
  $g\colon\R^d\to\R$ 
  and 
  $u\colon [0,T]\times\R^d\to\R$
  be the functions which satisfy for all 
 $ t \in [0,T] $, $ x = (x_1,\dots,x_d) \in \R^d$ 
  that $g(x) = c\|x\|^2_{\R^d} = c \sum_{ i = 1 }^d | x_i |^2 $ and 
\begin{equation}
\label{eq:def_u_lemma}
  u(t,x) = \exp\!\left( [ r + | \sigma_{ \max } |^2 ] ( T - t ) \right) g(x).
\end{equation}
Then it holds for all 
$ t \in [0,T] $, $ x = (x_1,\dots,x_d) \in \R^d$ 
that 
  $u\in C^{\infty}([0,T]\times\R^d,\R)$, 
  $u(T,x) = g(x)$,  
  and 
\begin{equation}
  \tfrac{ \partial u }{ \partial t }(t,x)  
  + 
  \tfrac12
  {\textstyle 
    \sum\limits_{i=1}^d
  }
  | x_i |^2
  \big|
    \bar\sigma\big(
      \tfrac{ \partial^2 u }{ \partial x_i^2 }( t, x) 
    \big)
  \big|^2
  \tfrac{ \partial^2 u }{ \partial x_i^2 }(t,x)
  =  
  r 
  \big(
    u(t,x) - \langle x, ( \nabla_x u )(t,x) \rangle_{\R^d}
  \big)
  .
\end{equation}
\end{lemma}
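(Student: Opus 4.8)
The plan is to verify the three claimed properties by direct computation, since $u$ is given by the fully explicit formula \eqref{eq:def_u_lemma}. First I would observe that $u$ is the product of the map $[0,T]\ni t\mapsto \exp([r+|\sigma_{\max}|^2](T-t))\in\R$, which is smooth in $t$, and the polynomial $g(x)=c\sum_{i=1}^d|x_i|^2$, which is smooth in $x$; hence $u\in C^{\infty}([0,T]\times\R^d,\R)$. The terminal condition $u(T,x)=g(x)$ then follows immediately upon setting $t=T$, since the exponential prefactor equals $\exp(0)=1$.

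Next I would compute all derivatives entering the PDE. Writing $\alpha=r+|\sigma_{\max}|^2$ for brevity, one has $\tfrac{\partial u}{\partial t}(t,x)=-\alpha\,e^{\alpha(T-t)}g(x)=-\alpha\,u(t,x)$, while for each $i\in\{1,\ldots,d\}$ one gets $\tfrac{\partial u}{\partial x_i}(t,x)=2c\,e^{\alpha(T-t)}x_i$ and $\tfrac{\partial^2 u}{\partial x_i^2}(t,x)=2c\,e^{\alpha(T-t)}$. In particular $\langle x,(\nabla_x u)(t,x)\rangle_{\R^d}=2c\,e^{\alpha(T-t)}\sum_{i=1}^d|x_i|^2=2\,e^{\alpha(T-t)}g(x)=2\,u(t,x)$, so the right-hand side of the PDE equals $r(u(t,x)-2\,u(t,x))=-r\,u(t,x)$.

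The crucial step is to handle the nonlinearity $\bar\sigma$, whose definition \eqref{eq:def_bar_sigma} is discontinuous at $0$. Here the point is that the diagonal second derivatives $\tfrac{\partial^2 u}{\partial x_i^2}(t,x)=2c\,e^{\alpha(T-t)}$ are strictly positive for all $(t,x)\in[0,T]\times\R^d$, because $c>0$ and the exponential is positive. Consequently the argument of $\bar\sigma$ always lies in the region $[0,\infty)$ on which $\bar\sigma\equiv\sigma_{\max}$, so that $|\bar\sigma(\tfrac{\partial^2 u}{\partial x_i^2}(t,x))|^2=|\sigma_{\max}|^2$ uniformly. This sidesteps the apparent difficulty caused by the discontinuity of $\bar\sigma$ and turns the fully nonlinear second-order term into a linear one.

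Finally I would substitute these computations into the left-hand side of the PDE and simplify. The diffusion term becomes $\tfrac12\sum_{i=1}^d|x_i|^2\,|\sigma_{\max}|^2\,(2c\,e^{\alpha(T-t)})=|\sigma_{\max}|^2\,e^{\alpha(T-t)}g(x)=|\sigma_{\max}|^2\,u(t,x)$, so together with $\tfrac{\partial u}{\partial t}=-\alpha\,u$ the left-hand side equals $(-\alpha+|\sigma_{\max}|^2)\,u(t,x)=-r\,u(t,x)$, matching the right-hand side computed above. I do not expect any genuine obstacle; the only point requiring care is the verification that the sign of $\tfrac{\partial^2 u}{\partial x_i^2}$ selects the branch $\bar\sigma=\sigma_{\max}$, which is precisely why the constant $|\sigma_{\max}|^2$ appears in the exponent of the closed-form solution \eqref{eq:def_u_lemma}.
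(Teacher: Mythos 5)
Your proposal is correct and follows essentially the same route as the paper's own proof: a direct computation of $\tfrac{\partial u}{\partial t}$, $\nabla_x u$, and $\tfrac{\partial^2 u}{\partial x_i^2}$, the observation that the strict positivity of the diagonal second derivatives forces $\bar\sigma$ onto the $\sigma_{\max}$ branch, and a final substitution showing both sides equal $-r\,u(t,x)$. No gaps; the identification of the branch selection as the only delicate point matches the paper's argument exactly.
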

\begin{proof}[Proof of Lemma~\ref{lem:bsb_analytic}]
Observe that the function $ u $ is clearly infinitely often differentiable. 
Next note that \eqref{eq:def_u_lemma} ensures that for all 
$ t \in [0,T] $, 
$ x = (x_1,\dots,x_d) \in \R^d $ 
it holds that
\begin{equation}
  u(t,x) = 
  \exp\!\left( - t [ r + | \sigma_{ \max } |^2 ] + T [ r + | \sigma_{ \max } |^2 ] \right) 
  g(x) .
\end{equation}
Hence, we obtain that 
for all
$ t \in [0,T] $, 
$ x = (x_1,\dots,x_d) \in \R^d $,
$ i \in \{ 1, 2, \dots, d \} $
it holds that
\begin{equation}
\label{eq:lem_first_t}
 \tfrac{\partial u}{\partial t}(t,x) 
 = 
 - [ r + | \sigma_{\max} |^2 ] u(t,x) , 
\end{equation}
\begin{equation}
\begin{split}
  \langle x, (\nabla_x u)(t,x) \rangle_{\R^d} 
& = 
  \exp\!\left( - t [ r + | \sigma_{ \max } |^2 ] + T [ r + | \sigma_{ \max } |^2 ] \right) 
  \left< 
    x, ( \nabla g )( x )
  \right>_{ \R^d }
\\ & =
  \exp\!\left( - t [ r + | \sigma_{ \max } |^2 ] + T [ r + | \sigma_{ \max } |^2 ] \right) 
  \left< 
    x, 2 c x
  \right>_{ \R^d }
\\ & =
  2
  c
  \exp\!\left( - t [ r + | \sigma_{ \max } |^2 ] + T [ r + | \sigma_{ \max } |^2 ] \right) 
  \left\| x \right\|^2_{ \R^d }
  =
  2 u(t,x) , 
\end{split}
\end{equation}
and
\begin{equation}
\label{eq:lem_2nd_u}
\begin{split} 
  \tfrac{\partial^2 u}{\partial x_i^2}(t,x)
& =
  2 c
  \exp\!\left( - t [ r + | \sigma_{ \max } |^2 ] + T [ r + | \sigma_{ \max } |^2 ] \right) 
  > 0
  .
\end{split}
\end{equation}
Combining this with \eqref{eq:def_bar_sigma} demonstrates that
for all
$ t \in [0,T] $, 
$ x = (x_1,\dots,x_d) \in \R^d $,
$ i \in \{ 1, 2, \dots, d \} $
it holds that
\begin{equation}
\begin{split}
  \bar\sigma\bigl(
    \tfrac{\partial^2 u}{\partial x_i^2}(t,x)
  \bigr) 
  = \sigma_{\max} 
  .
\end{split}
\end{equation}
This and \eqref{eq:lem_first_t}--\eqref{eq:lem_2nd_u} ensure that
for all
$ t \in [0,T] $, 
$ x = (x_1,\dots,x_d) \in \R^d $
it holds that
\begin{equation}
\begin{split}
&
  \tfrac{ \partial u }{ \partial t }(t,x)  
  + 
  \tfrac12
  {\textstyle 
    \sum\limits_{i=1}^d
  }
  | x_i |^2
  \big|
    \bar\sigma\big(
      \tfrac{ \partial^2 u }{ \partial x_i^2 }( t, x) 
    \big)
  \big|^2
  \tfrac{ \partial^2 u }{ \partial x_i^2 }(t,x)
  -
  r 
  \big(
    u(t,x) - \langle x, ( \nabla_x u )(t,x) \rangle_{\R^d}
  \big)
\\
&
=
  - \left[ r + | \sigma_{\max} |^2 \right] u(t,x)  
  + 
  \tfrac12
  {\textstyle 
    \sum\limits_{i=1}^d
  }
  | x_i |^2
  \big|
    \bar\sigma\big(
      \tfrac{ \partial^2 u }{ \partial x_i^2 }( t, x) 
    \big)
  \big|^2
  \tfrac{ \partial^2 u }{ \partial x_i^2 }(t,x)
  -
  r 
  \big(
    u(t,x) - 2 u(t,x)
  \big)
\\ &
=
  - \left[ r + | \sigma_{\max} |^2 \right] u(t,x)  
  + 
  \tfrac12
  {\textstyle 
    \sum\limits_{i=1}^d
  }
  | x_i |^2
  | \sigma_{ \max } |^2
  \tfrac{ \partial^2 u }{ \partial x_i^2 }(t,x)
  +
  r 
  u(t,x)
\\ &
=
  \tfrac12
  {\textstyle 
    \sum\limits_{i=1}^d
  }
  | x_i |^2
  | \sigma_{ \max } |^2
  \tfrac{ \partial^2 u }{ \partial x_i^2 }(t,x)
  - | \sigma_{\max} |^2 u(t,x)  
=
  | \sigma_{ \max } |^2
  \left[ 
    \tfrac12
    {\textstyle 
      \sum\limits_{i=1}^d
    }
    | x_i |^2
    \tfrac{ \partial^2 u }{ \partial x_1^2 }(t,x)
    - 
    u(t,x)  
  \right]
\\ & =
  | \sigma_{ \max } |^2
  \left[ 
    \tfrac12
    \left\| x \right\|^2_{ \R^d }
    \tfrac{ \partial^2 u }{ \partial x_1^2 }(t,x)
    - 
    u(t,x)  
  \right]
\\ &
=
  | \sigma_{ \max } |^2
  \left[ 
    c
    \left\| x \right\|^2_{ \R^d }
    \exp\!\left( - t [ r + | \sigma_{ \max } |^2 ] + T [ r + | \sigma_{ \max } |^2 ] \right) 
    - 
    u(t,x)  
  \right]
  = 0
  .
\end{split}
\end{equation}
The proof of Lemma~\ref{lem:bsb_analytic} is thus completed.
\end{proof}

\subsection{A $100$-dimensional Hamilton-Jacobi-Bellman equation}
\label{subsec:example_square_gradient}
In this subsection we use the deep 2BSDE method 
in Framework \ref{def:general_algorithm} to 
approximatively calculate the solution of a $100$-dimensional 
Hamilton-Jacobi-Bellman equation with a nonlinearity that is 
quadratic in the gradient (see, e.g., \cite[Section~4.3]{EHanJentzen2017} 
and \eqref{eq:example_hamilton_jacobi_bellman} 
below).

Assume the setting of Subsection \ref{subsec:example_setting},  
assume
  $d=100$, 
  $T=1$, 
  $N=20$, 
  $\varepsilon = 10^{-8}$, 
assume for all $\omega\in\Omega$ that 
  $\xi(\omega) = (0,0,\ldots,0)\in\R^d$,   
and assume for all 
  $m\in\N_0$, 
  $s,t\in [0,T]$, 
  $x,w,z\in\R^d$, 
  $y\in\R$, 
  $S\in\R^{d\times d}$ 
that 
  $\sigma(x)=\sqrt{2}\,\operatorname{Id}_{\R^d}$,  
  $H(s,t,x,w) = x + \sqrt{2}\,w$, 
  $\gamma_m = \frac{1}{100}$,  
  $g(x) = \ln( \frac12 [ 1 + \|x\|_{\R^d}^2 ] )$, 
  and 
  \begin{align}
    f(t,x,y,z,S) = -\operatorname{Trace}(S) + \|z\|_{\R^d}^2.
  \end{align} 
The solution $u\colon [0,T]\times\R^d\to\R$ of the PDE \eqref{eq:example_PDE_general} 
then satisfies for all $(t,x)\in [0,T)\times\R^d$ that 
\begin{align}\label{eq:example_hamilton_jacobi_bellman}
 \tfrac{\partial u}{\partial t}(t,x) + (\Delta_x u)(t,x) = \|\nabla_x u(t,x)\|_{\R^d}^2.
\end{align}
In Table~\ref{tab:table_HamiltonJacobiBellman.tex} 
we use an adapted version of {\sc Python} code~\ref{code:deepPDEmethod} 
in Subsection~\ref{subsec:generalCode} below to approximatively calculate 
  the mean of $\U^{\Theta_m}(\xi)$, 
  the standard deviation of $\U^{\Theta_m}(\xi)$, 
  the relative $L^1$-approximation error associated to $\U^{\Theta_m}(\xi)$, 
  the uncorrected sample standard deviation of the relative approximation error associated to $\U^{\Theta_m}(\xi)$, 
  the mean of the empirical loss function associated to $\U^{\Theta_m}(\xi)$, 
  the standard deviation of the empirical loss function associated to $\U^{\Theta_m}(\xi)$, 
  and the average runtime in seconds needed for calculating one 
  realization of $\U^{\Theta_m}(\xi)$ 
against $m\in\{0,500,1000,1500,2000\}$ based on $10$ 
independent realizations ($10$ independent runs). 
In addition, Figure~\ref{fig:figure_HamiltonJacobiBellman} depicts 
approximations of the mean of the relative $L^1$-approximation error and
approximations of the mean of the empirical loss function associated to $\Theta_m$ 
against $m\in\{0,1,\ldots,2000\}$ based on $10$ independent realizations ($10$ independent runs). 
In the calculation of the relative $L^1$-approximation errors 
in 
Table \ref{tab:table_HamiltonJacobiBellman.tex} and 
Figure \ref{fig:figure_HamiltonJacobiBellman}
the value $u(0,(0,0,\ldots,0))$ of the solution 
of the PDE \eqref{eq:example_hamilton_jacobi_bellman} has 
been replaced by the value $4.5901$ which, in turn, was calculated 
by means of Lemma 4.2 in \cite{EHanJentzen2017} 
(with $d=100$, $T=1$, $\alpha=1$, $\beta=-1$, 
$g=\R^d\ni x\mapsto \ln(\tfrac12[1+\|x\|_{\R^d}^2]) \in \R $ in the 
notation of Lemma 4.2 in \cite{EHanJentzen2017}) 
and the classical Monte Carlo method 
(cf.\ {\sc Matlab} code~\ref{code:MC_HJB} 
in Appendix~\ref{subsec:MC_HJB} below). 
\begin{center}
\begin{table}
\begin{center}
\begin{tabular}{|c|c|c|c|c|c|c|c|}
\hline
Number&Mean&Standard&Rel.\ $L^1$-&Standard&Mean&Standard&Runtime\\
of&of $ \mathcal{U}^{ \Theta_m } $&deviation&approx.&deviation&of the&deviation&in sec.\\
iteration&&of $ \mathcal{U}^{ \Theta_m } $&error&of the&empirical&of the&for one \\
steps &&&&relative&loss&empirical&realiz.\\
&&&&approx.&function&loss&of $ \mathcal{U}^{ \Theta_m } $\\
&&&&error&&function&\\
\hline
    0 & 
0.6438 & 
0.2506 & 
0.8597 & 
0.05459 & 
8.08967 & 
1.65498 & 
24\\
  500 & 
2.2008 & 
0.1721 & 
0.5205 & 
0.03750 & 
4.44386 & 
0.51459 & 
939\\
 1000 & 
3.6738 & 
0.1119 & 
0.1996 & 
0.02437 & 
1.46137 & 
0.46636 & 
1857\\
 1500 & 
4.4094 & 
0.0395 & 
0.0394 & 
0.00860 & 
0.26111 & 
0.08805 & 
2775\\
 2000 & 
4.5738 & 
0.0073 & 
0.0036 & 
0.00159 & 
0.05641 & 
0.01412 & 
3694\\
\hline
\end{tabular}
\end{center}
\caption{
Numerical simulations of the deep2BSDE method 
in Framework 
\ref{def:general_algorithm} in the case of 
the $100$-dimensional 
Hamilton-Jacobi-Bellman equation 
\eqref{eq:example_hamilton_jacobi_bellman}.
In the approximative calculations of the relative 
$L^1$-approximation errors the value $u(0,(0,0,\ldots,0))$ has 
been replaced by the value 4.5901
which has been calculated by means of the classical 
Monte Carlo method 
(cf.\ {\sc Matlab} code 
\ref{code:MC_HJB} in Appendix
\ref{subsec:MC_HJB} below).
\label{tab:table_HamiltonJacobiBellman.tex}}
\end{table}
\end{center}
\begin{figure}
\includegraphics{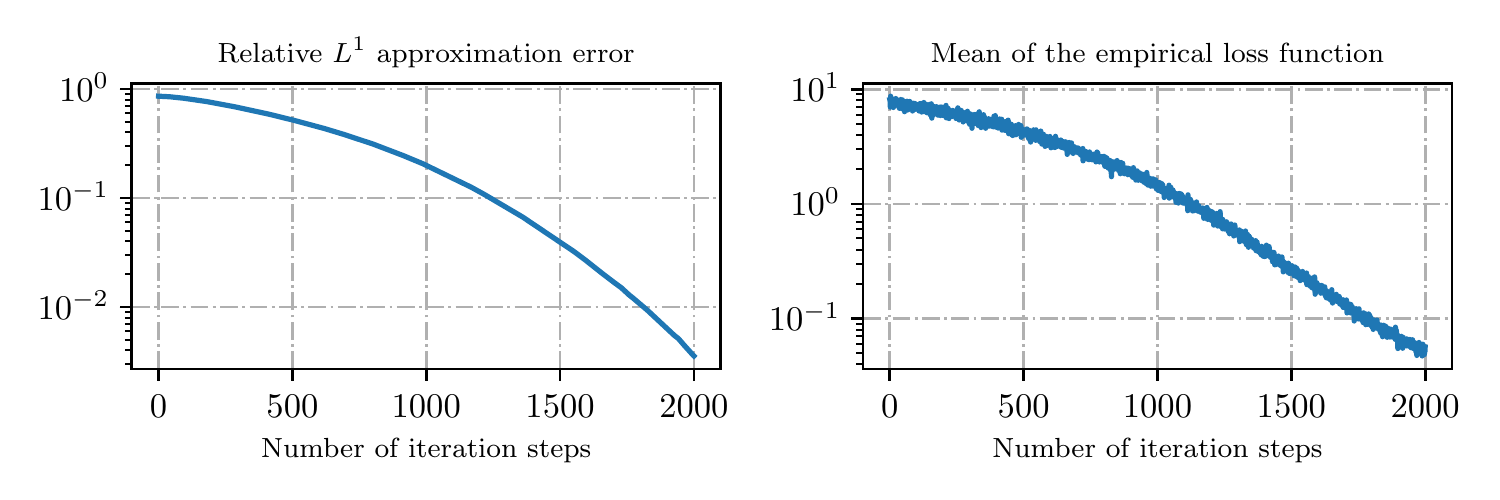}
\input{HamiltonJacobiBellman_Caption}
\end{figure}

\subsection{A $50$-dimensional Allen-Cahn equation}
\label{subsec:example_allen_cahn}
In this subsection we use the deep 2BSDE method 
in Framework \ref{def:general_algorithm} 
to approximatively calculate the solution of a $50$-dimensional 
Allen-Cahn equation with a cubic nonlinearity (see 
\eqref{eq:example_pde_allen_cahn} below).

Assume the setting of Subsection \ref{subsec:example_setting}, 
assume
  $T=\tfrac{3}{10}$, 
  $N=20$, 
  $d=50$, 
  $\varepsilon = 1$, 
assume for all $\omega\in\Omega$ that 
  $\xi(\omega) = (0,0,\ldots,0)\in\R^{50}$,   
and assume for all 
  $m\in\N_0$, 
  $s,t\in [0,T]$,
  $x,w,z\in\R^d$
  $y\in\R$, 
  $S\in\R^{d\times d}$
  that 
  $\sigma(x) = \sqrt{2}\operatorname{Id}_{\R^d}$, 
  $H(s,t,x,w) = x + \sigma(x)w = x + \sqrt{2}w$,
  $g(x)=[2+\tfrac{2}{5}\|x\|_{\R^d}^2]^{-1}$, 
  $
    f(t,x,y,z,S) = -\operatorname{Trace}(S) - y + y^3
  $,
and
  \begin{align}
    \gamma_m = \tfrac{1}{10} \cdot \left[ \tfrac{ 9 }{ 10 } \right]^{ \lfloor \frac{ m }{ 1000 } \rfloor }
    .
  \end{align} 
The solution $u$ to the PDE \eqref{eq:example_PDE_general} then satisfies 
for all $(t,x)\in [0,T)\times\R^d$ that $u(T,x) = g(x)$ and
\begin{align}\label{eq:example_pde_allen_cahn}
 \tfrac{ \partial u }{ \partial t }( t, x ) 
 + 
 \Delta u ( t, x ) 
 + 
 u ( t, x ) 
 - 
 \left[ u ( t, x ) \right]^3
 = 0.
\end{align}
In Table~\ref{tab:table_AllenCahn50.tex} we use an 
adapted version of {\sc Python} code \ref{code:deepPDEmethod} 
in Subsection \ref{subsec:generalCode} below to 
approximatively calculate 
  the mean $\U^{\Theta_m}(\xi)$, 
  the standard deviation of $\U^{\Theta_m}(\xi)$, 
  the relative $L^1$-approximation error associated to $\U^{\Theta_m}(\xi)$, 
  the uncorrected sample standard deviation of the relative approximation error associated to $\U^{\Theta_m}(\xi)$, 
  the mean of the empirical loss function associated to $\U^{\Theta_m}(\xi)$, 
  the standard deviation of the empirical loss function associated to $\U^{\Theta_m}(\xi)$, 
  and the average runtime in seconds needed for calculating one realization of $\U^{\Theta_m}(\xi)$ 
against
  $m\in\{0,500,1000,1500,2000\}$
based on $10$ independent realizations ($10$ independent runs). 
In addition, Figure~\ref{fig:figure_AllenCahn50} depicts approximations 
of the relative $L^1$-approximation error and 
approximations of the mean of the empirical loss function associated 
to $\Theta_m$ against $m\in\{0,1,\ldots,2000\}$ based on $10$ independent realizations ($10$ independent runs). 
In the approximate calculations of the relative $L^1$-approximation errors 
in Table \ref{tab:table_AllenCahn50.tex} and Figure \ref{fig:figure_AllenCahn50} 
the value $u(0,(0,0,\ldots,0))$ of the solution $u$ of the PDE \eqref{eq:example_pde_allen_cahn}
has been replaced by the value $0.09909$ which, in turn, has been 
calculated through the Branching diffusion method 
(cf. {\sc Matlab} code \ref{code:branchingMatlab} in Subsection~\ref{subsec:BranchingMatlab} 
below). 

\begin{center}
\begin{table}
\begin{center}
\begin{tabular}{|c|c|c|c|c|c|c|c|}
\hline
Number&Mean&Standard&Rel.\ $L^1$-&Standard&Mean&Standard&Runtime\\
of&of $ \mathcal{U}^{ \Theta_m } $&deviation&approx.&deviation&of the&deviation&in sec.\\
iteration&&of $ \mathcal{U}^{ \Theta_m } $&error&of the&empirical&of the&for one \\
steps &&&&relative&loss&empirical&realiz.\\
&&&&approx.&function&loss&of $ \mathcal{U}^{ \Theta_m } $\\
&&&&error&&function&\\
\hline
    0 & 
0.5198 & 
0.19361 & 
4.24561 & 
1.95385 & 
0.5830 & 
0.4265 & 
22\\
  500 & 
0.0943 & 
0.00607 & 
0.06257 & 
0.04703 & 
0.0354 & 
0.0072 & 
212\\
 1000 & 
0.0977 & 
0.00174 & 
0.01834 & 
0.01299 & 
0.0052 & 
0.0010 & 
404\\
 1500 & 
0.0988 & 
0.00079 & 
0.00617 & 
0.00590 & 
0.0008 & 
0.0001 & 
595\\
 2000 & 
0.0991 & 
0.00046 & 
0.00371 & 
0.00274 & 
0.0003 & 
0.0001 & 
787\\
\hline
\end{tabular}
\end{center}
\caption{
Numerical simulations of the deep2BSDE method 
in Framework 
\ref{def:general_algorithm} in the case of 
the $50$-dimensional
Allen-Cahn equation 
\eqref{eq:example_pde_allen_cahn}.
In the approximative calculations of the relative 
$L^1$-approximation errors the value $u(0,(0,0,\ldots,0))$ has 
 been replaced by the value 0.09909
which has been calculated through the Branching diffusion 
method (cf. {\sc Matlab} code 
\ref{code:branchingMatlab} in Subsection 
\ref{subsec:BranchingMatlab} below).
\label{tab:table_AllenCahn50.tex}}
\end{table}
\end{center}

\begin{figure}
 \includegraphics{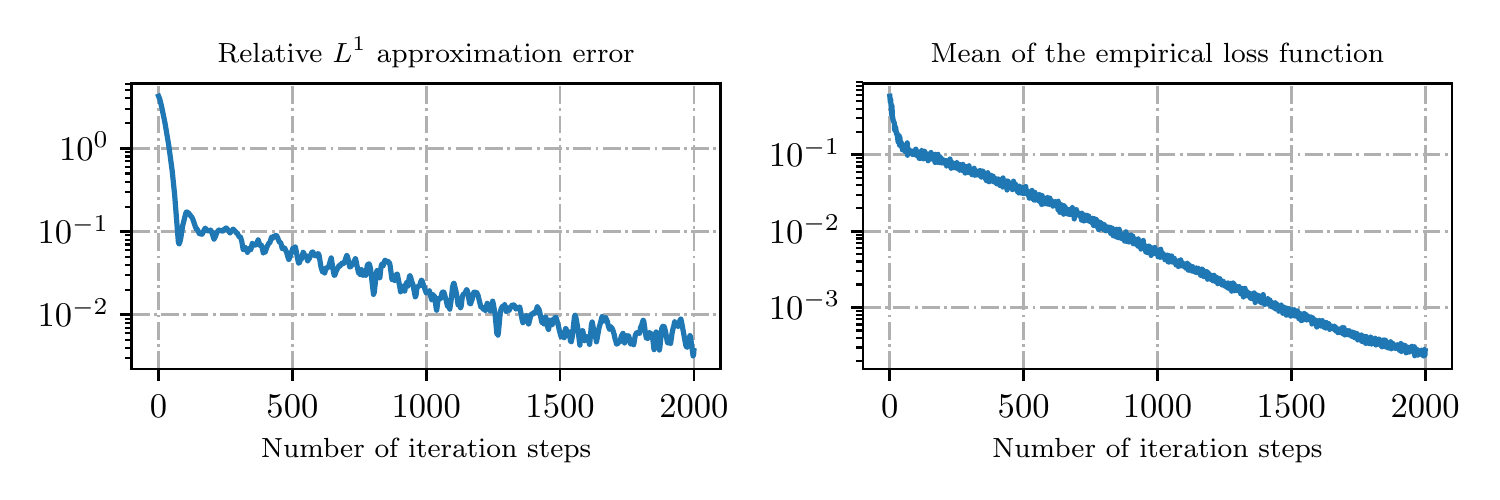}
 \input{AllenCahn50_Caption}
\end{figure}

\subsection{$ G $-Brownian motions in $ 1 $ and $ 100 $ space-dimensions}
\label{subsec:example_gbm}
In this subsection we use the deep 2BSDE method in 
Framework~\ref{def:general_algorithm} 
to approximatively calculate nonlinear expectations 
of a test function on a $ 100 $-dimensional $G$-Brownian motion 
and of a test function on a $ 1 $-dimensional $G$-Brownian motion. 
In the case of the $ 100 $-dimensional $ G $-Brownian motion 
we consider a specific test function such that 
the nonlinear expectation of this function on the $ 100 $-dimensional 
$ G $-Brownian motion admits an explicit analytic solution 
(see Lemma~\ref{lem:gbm_analytic} below). 
In the case of the $ 1 $-dimension $ G $-Brownian motion 
we compare the numerical results of the deep 2BSDE method 
with numerical results obtained by a finite difference approximation method.

Assume the setting of  
Subsection~\ref{subsec:example_setting}, 
assume 
  $T=1$, 
  $N=20$, 
  $\varepsilon = 10^{-8}$, 
let 
 $\sigma_{\text{max}} = 1$, 
 $\sigma_{\text{min}} = \tfrac{1}{\sqrt{2}}$, 
let 
 $\bar\sigma\colon\R\to\R$ be the function which satisfies 
 for all $x\in\R$ that  
 \begin{align}
  \bar\sigma(x) = 
  \begin{cases}
   \sigma_{\text{max}} & \colon x\geq 0 \\
   \sigma_{\text{min}} & \colon x < 0
  \end{cases},
 \end{align}
assume for all 
 $s,t\in [0,T]$, 
 $x=(x_1,\ldots,x_d),
 w=(w_1,\ldots,w_d),
 z=(z_1,\ldots,z_d)\in\R^d$, 
 $y\in\R$, 
 $S=(S_{ij})_{(i,j)\in\{1,\ldots,d\}^2}\in\R^{d\times d}$
that 
 $\sigma(x) = \operatorname{Id}_{ \R^d }$, 
 $H(s,t,x,w) = x + w$,
 $g(x) = \|x\|_{\R^d}^2$, 
 and 
 \begin{align}
  f(t,x,y,z,S) 
  = 
  -\tfrac12\sum_{i=1}^d \bigl[\bar\sigma(S_{ii})\bigr]^2S_{ii}.
 \end{align}
The solution $u\colon [0,T]\times\R^d\to\R$ of the 
PDE~\eqref{eq:example_PDE_general} then
satisfies for all $(t,x)\in [0,T)\times\R^d$ that 
$ u(T,x) = g(x) $ 
and 
\begin{align}\label{eq:example_gbm}
  \tfrac{\partial u}{\partial t}(t,x)  
  + \tfrac12
  {\textstyle 
  \sum\limits_{i=1}^d
  }
  \big|
    \bar\sigma\big(
      \tfrac{ \partial^2 u }{ \partial x_i^2 }( t, x) 
    \big)
  \big|^2
  \tfrac{\partial^2 u}{\partial x_i^2}(t,x)
  =
  0
  . 
\end{align}
\begin{center}
\begin{table}
\begin{center}
\begin{tabular}{|c|c|c|c|c|c|c|c|}
\hline
Number&Mean&Standard&Rel.\ $L^1$-&Standard&Mean&Standard&Runtime\\
of&of $ \mathcal{U}^{ \Theta_m } $&deviation&approx.&deviation&of the&deviation&in sec.\\
iteration&&of $ \mathcal{U}^{ \Theta_m } $&error&of the&empirical&of the&for one \\
steps &&&&relative&loss&empirical&realiz.\\
&&&&approx.&function&loss&of $ \mathcal{U}^{ \Theta_m } $\\
&&&&error&&function&\\
\hline
    0 & 
0.46 & 
0.35878 & 
0.99716 & 
0.00221 & 
26940.83 & 
676.70 & 
24\\
  500 & 
164.64 & 
1.55271 & 
0.01337 & 
0.00929 & 
13905.69 & 
2268.45 & 
757\\
 1000 & 
162.79 & 
0.35917 & 
0.00242 & 
0.00146 & 
1636.15 & 
458.57 & 
1491\\
 1500 & 
162.54 & 
0.14143 & 
0.00074 & 
0.00052 & 
403.00 & 
82.40 & 
2221\\
\hline
\end{tabular}
\end{center}
\caption{
Numerical simulations of the deep2BSDE method in Framework 
\ref{def:general_algorithm} in the case of 
the $100$-dimensional
$G$-Brownian motion
(cf.\ \eqref{eq:example_gbm} 
and \eqref{eq:specifications_GBM100}).
In the approximative calculations of 
the relative $L^1$-approximation errors the value 
$u(0,(1,\nicefrac12,1,\nicefrac12,\ldots,1,\nicefrac12))$
has been replaced by the value 
$162.5$ which has been calculated 
by means of Lemma 
\ref{lem:gbm_analytic}.
\label{tab:table_GBM100.tex}}
\end{table}
\end{center}
\begin{figure}
 \includegraphics{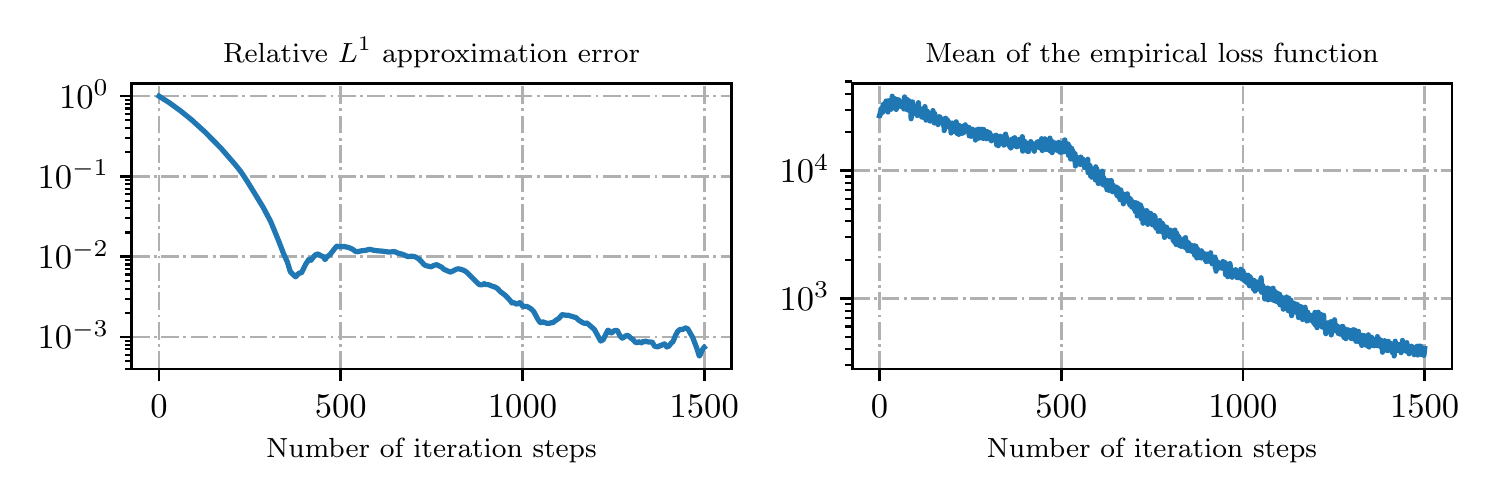}
 \input{GBM100_Caption} 
\end{figure}
In Table~\ref{tab:table_GBM100.tex} 
we use an adapted version of {\sc Python} code~\ref{code:deepPDEmethod} 
to approximatively calculate 
  the mean $\U^{\Theta_m}(\xi)$, 
  the standard deviation of $\U^{\Theta_m}(\xi)$, 
  the relative $L^1$-approximation error associated to $\U^{\Theta_m}(\xi)$, 
  the uncorrected sample standard deviation of the relative approximation error 
  associated to $\U^{\Theta_m}(\xi)$, 
  the mean of the empirical loss function associated to $\U^{\Theta_m}(\xi)$, 
  the standard deviation of the empirical loss function associated to $\U^{\Theta_m}(\xi)$, 
  and the average runtime in seconds needed for calculating one realization of $\U^{\Theta_m}(\xi)$ 
against $m\in\{0,500,1000,1500,2000\}$ based on $10$ realizations ($10$ independent runs) in the case 
where for all $ x \in \R^d $, $ m \in \N_0 $, $ \omega \in \Omega $ it holds that
\begin{equation}
\label{eq:specifications_GBM100}
\begin{split}
   d = 100, 
   \;\;
   g(x) = \| x \|_{ \R^d }^2 
   ,
   \;\;
   \gamma_m = \left[ \tfrac{ 1 }{ 2 } \right]^{ \lfloor \frac{ m }{ 500 } \rfloor }
   ,
   \;\;
   \text{and}
   \;\;
  \xi( \omega ) = 
  (1,\tfrac12,1,\tfrac12,\ldots,1,\tfrac12) \in \R^d 
  .
\end{split}
\end{equation}
In addition, Figure~\ref{fig:figure_gbm100} depicts approximations of the relative $L^1$-approximation error 
associated to $\U^{\Theta_m}(\xi)$ and approximations of mean of the empirical loss function associated to $\Theta_m$ 
against $m\in \{0,1,\ldots,2000\}$ based on $10$ independent realizations ($10$ independent runs) 
in the case of \eqref{eq:specifications_GBM100}. 
In the approximative calculations of the relative 
$ L^1 $-approximation errors in Table~\ref{tab:table_GBM100.tex}
and Figure~\ref{fig:figure_gbm100}
the value 
$ u( 0, (1,\nicefrac12,1,\nicefrac12,\ldots,1,\nicefrac12)) $ of the solution $ u $ of the PDE 
(cf.\ \eqref{eq:example_gbm} and \eqref{eq:specifications_GBM100})
has been replaced by the value 
$ 162.5 $ which, in turn, has been calculated 
by means of Lemma~\ref{lem:gbm_analytic}~below 
(with $ c = 1 $, $ \sigma_{ \max } = 1 $, $ T = 1 $, $ \sigma_{ \min } = \nicefrac{ 1 }{ \sqrt{2} } $, 
$ d = 100 $ in the notation of Lemma~\ref{lem:gbm_analytic} below). 
\begin{lemma}\label{lem:gbm_analytic}
Let $c,\sigma_{\max}, T\in (0,\infty)$, $\sigma_{\min}\in (0,\sigma_{\max})$, $d\in\N$, 
let $\bar\sigma\colon\R\to\R$ be the function which satisfies 
for all $x\in\R$ that 
\begin{align}\label{lem:gbm_sigmabar}
 \bar{\sigma}(x) = \begin{cases}
                    \sigma_{\max} & \colon x\geq 0 \\
                    \sigma_{\min} & \colon x < 0
                   \end{cases},
\end{align}
and let 
  $g\colon\R^d\to\R$ 
  and 
  $u\colon [0,T]\times\R^d\to\R$
  be the functions which satisfy 
  for all 
   $t\in [0,T]$, 
   $x = (x_1,\ldots,x_d)\in \R^d$ 
  that 
  $g(x) = c\|x\|^2_{\R^d} = c\sum_{i=1}^d |x_i|^2$ and 
\begin{align}
\label{eq:def_u_lemma2}
 u(t,x) = g(x) + c d |\sigma_{\max}|^2 (T-t).
\end{align}
Then it holds for all 
  $t\in [0,T]$, 
  $x=(x_1,\ldots,x_d)\in\R^d$ 
  that 
  $u\in C^{\infty}([0,T]\times\R^d,\R)$, 
  $u(T,x) = g(x)$,  
  and 
  \begin{align}
  \tfrac{\partial u}{\partial t}(t,x)  
  +
  \tfrac12
  {\textstyle
    \sum\limits_{i=1}^d 
  }
  \big|
    \bar{\sigma}\big(
      \tfrac{ \partial^2 u }{ \partial x_i^2 }(t,x)
    \big)
  \big|^2
  \tfrac{\partial^2 u}{\partial x_i^2}(t,x)
  = 0 .
 \end{align}
\end{lemma}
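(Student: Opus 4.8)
The plan is to verify the three claimed properties by direct differentiation, exploiting the fact that the prescribed function $u$ is a quadratic polynomial in the spatial variable with an affine dependence on time. This is structurally identical to the proof of Lemma~\ref{lem:bsb_analytic} above, but simpler, since here the time-dependence is affine rather than exponential.

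First I would observe that, by \eqref{eq:def_u_lemma2}, the function $u(t,x) = c\sum_{i=1}^d |x_i|^2 + c d |\sigma_{\max}|^2 (T-t)$ is a polynomial in $(t,x)$ and hence lies in $C^{\infty}([0,T]\times\R^d,\R)$. Plugging $t = T$ into \eqref{eq:def_u_lemma2} immediately yields $u(T,x) = g(x)$ for all $x\in\R^d$, which settles the first two assertions.

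Next I would compute the relevant partial derivatives. Differentiating \eqref{eq:def_u_lemma2} in time gives $\tfrac{\partial u}{\partial t}(t,x) = -c d |\sigma_{\max}|^2$ for all $(t,x)\in [0,T]\times\R^d$, while differentiating twice in $x_i$ gives $\tfrac{\partial^2 u}{\partial x_i^2}(t,x) = 2c$ for every $i\in\{1,\ldots,d\}$, a quantity that is strictly positive since $c\in (0,\infty)$.

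The key step, and the only place where the nonlinearity genuinely enters, is then to evaluate $\bar\sigma$ at these second derivatives. Since $2c \geq 0$, the definition \eqref{lem:gbm_sigmabar} forces $\bar\sigma\bigl(\tfrac{\partial^2 u}{\partial x_i^2}(t,x)\bigr) = \bar\sigma(2c) = \sigma_{\max}$ for each $i$, so the nonlinearity collapses to a constant. Substituting into the left-hand side of the claimed identity, the sum becomes $\tfrac12 \sum_{i=1}^d |\sigma_{\max}|^2 \, (2c) = c d |\sigma_{\max}|^2$, which exactly cancels $\tfrac{\partial u}{\partial t}(t,x) = -c d |\sigma_{\max}|^2$ and therefore yields zero, as required. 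I expect no real obstacle here: the entire point is that, for this convex-in-$x$ choice of $u$, all pure second derivatives are positive, so $\bar\sigma$ always selects $\sigma_{\max}$ and the fully nonlinear equation reduces to a linear heat-type identity.
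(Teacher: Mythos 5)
Your proposal is correct and follows essentially the same route as the paper's own proof: compute $\tfrac{\partial u}{\partial t}=-cd|\sigma_{\max}|^2$ and $\tfrac{\partial^2 u}{\partial x_i^2}=2c>0$, observe that $\bar\sigma(2c)=\sigma_{\max}$, and verify the cancellation. No gaps.
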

\begin{proof}[Proof of Lemma~\ref{lem:gbm_analytic}]
Observe that the function $u$ is clearly infinitely often differentiable.
Next note that \eqref{eq:def_u_lemma2} ensures that
for all 
 $t\in [0,T]$, 
 $x=(x_1,\ldots,x_d)\in\R^d$, 
$i\in\{1,2,\ldots,d\}$ 
it holds that 
\begin{equation}\label{eq:lem_gbm_partial_u_time}
 \tfrac{\partial u}{\partial t}(t,x) 
 = 
 \tfrac{\partial}{\partial t}
 \bigl[ 
 g(x) + c d |\sigma_{\max}|^2 (T-t)
 \bigr]
 = 
 \tfrac{\partial}{\partial t}
 \bigl[ 
 c d |\sigma_{\max}|^2 (T-t)
 \bigr]
 =
 -c d |\sigma_{\max}|^2
\end{equation}
and
\begin{equation}\label{eq:lem_gbm_second_partial_u}
 \tfrac{\partial^2 u}{\partial x_i^2}(t,x) 
 = 
 \tfrac{\partial^2}{\partial x_i^2}
 \bigl[
 g(x) + c d |\sigma_{\max}|^2 (T-t)
 \bigr]
 = \tfrac{\partial^2 g}{\partial x_i^2}(x) = 2 c > 0.
 \end{equation}
Combining this with \eqref{lem:gbm_sigmabar} shows that 
for all 
  $t\in [0,T]$, 
  $x=(x_1,\ldots,x_d)\in\R^d$, 
  $i\in\{1,2,\ldots,d\}$ 
  it holds that
\begin{equation}
\bar\sigma\bigl(\tfrac{\partial^2 u}{\partial x_i^2}(t,x)\bigr) 
= \bar\sigma(2c) 
= \sigma_{\max}.
\end{equation}
This, \eqref{eq:lem_gbm_partial_u_time}, and \eqref{eq:lem_gbm_second_partial_u} 
yield that for all $t\in [0,T]$, $x=(x_1,\ldots,x_d)\in \R^d$ it holds that
\begin{equation}
\begin{split}
  \tfrac{\partial u}{\partial t}(t,x)
  +
  \tfrac12
  {\textstyle
    \sum\limits_{i=1}^d 
  }
  \big|
    \bar\sigma\big(
      \tfrac{\partial^2 u}{\partial x_i^2}(t,x)
     \big)
   \big|^2
   \tfrac{\partial^2 u}{\partial x_i^2}(t,x)
& =
  \tfrac{\partial u}{\partial t}(t,x)
  +
  \tfrac12
  {\textstyle
    \sum\limits_{i=1}^d 
  }
  |
    \sigma_{ \max } 
  |^2
  \tfrac{\partial^2 u}{\partial x_i^2}(t,x)
\\ & =
  \tfrac{\partial u}{\partial t}(t,x)
  +
  c d
  |
    \sigma_{ \max } 
  |^2
  =
  0
 .
\end{split}
\end{equation}
This completes the proof of Lemma~\ref{lem:gbm_analytic}.
\end{proof}
\begin{center}
\begin{table}
\begin{center}
\begin{tabular}{|c|c|c|c|c|c|c|c|}
\hline
Number&Mean&Standard&Rel.\ $L^1$-&Standard&Mean&Standard&Runtime\\
of&of $ \mathcal{U}^{ \Theta_m } $&deviation&approx.&deviation&of the&deviation&in sec.\\
iteration&&of $ \mathcal{U}^{ \Theta_m } $&error&of the&empirical&of the&for one \\
steps &&&&relative&loss&empirical&realiz.\\
&&&&approx.&function&loss&of $ \mathcal{U}^{ \Theta_m } $\\
&&&&error&&function&\\
\hline
    0 & 
0.4069 & 
0.28711 & 
0.56094 & 
0.29801 & 
29.905 & 
25.905 & 
22\\
  100 & 
0.8621 & 
0.07822 & 
0.08078 & 
0.05631 & 
1.003 & 
0.593 & 
24\\
  200 & 
0.9097 & 
0.01072 & 
0.00999 & 
0.00840 & 
0.159 & 
0.068 & 
26\\
  300 & 
0.9046 & 
0.00320 & 
0.00281 & 
0.00216 & 
0.069 & 
0.048 & 
28\\
  500 & 
0.9017 & 
0.00159 & 
0.00331 & 
0.00176 & 
0.016 & 
0.005 & 
32\\
\hline
\end{tabular}
\end{center}
\caption{
Numerical simulations of the deep2BSDE method in Framework 
\ref{def:general_algorithm} in the case of 
the $1$-dimensional
$G$-Brownian motion
(cf.\ \eqref{eq:example_gbm} and 
\eqref{eq:specifications_GBM1}).
In the approximative calculations of the relative 
$L^1$-approximation error the value $u(0,-2)$ has 
been replaced by the value 0.90471
which has been calculated through finite differences 
approximations
(cf.\ {\sc Matlab} code \ref{code:finiteMatlab} 
in Subsection \ref{subsec:code_finiteMatlab} below).}
\label{tab:table_GBM1.tex}
\end{table}
\end{center}
\begin{figure}
 \includegraphics{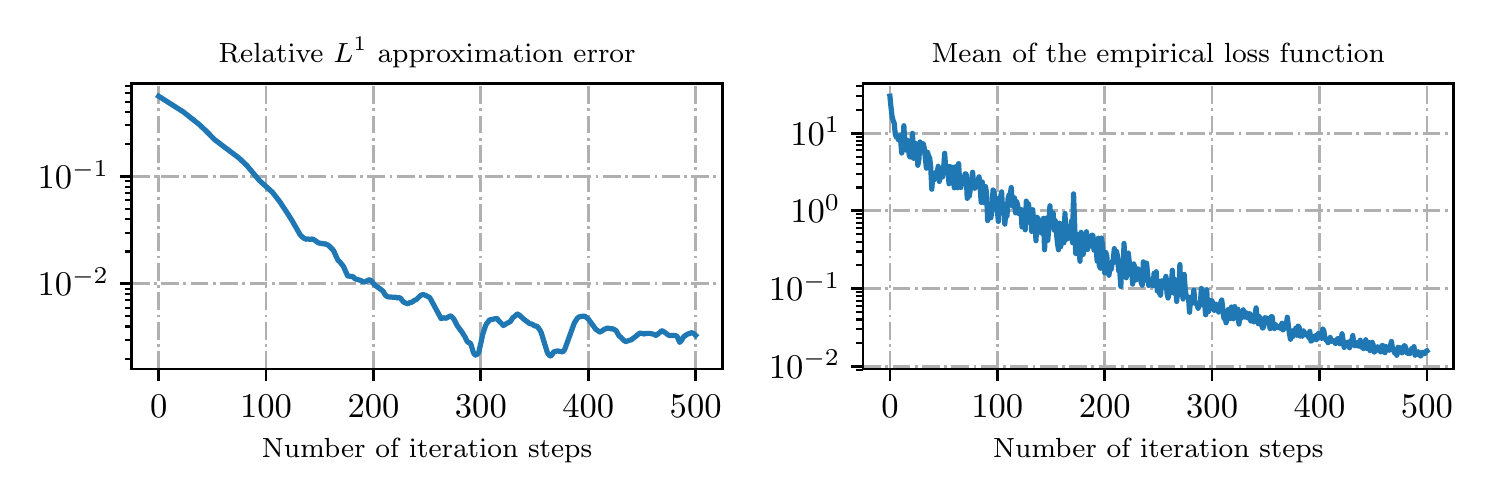}
 \input{GBM1_Caption} 
\end{figure}
In Table~\ref{tab:table_GBM1.tex} we use an adapted version of 
{\sc Python} code \ref{code:deepPDEmethod} in Subsection \ref{subsec:generalCode} below 
to approximatively calculate 
  the mean of $\U^{\Theta_m}(\xi)$, 
  the standard deviation of $\U^{\Theta_m}(\xi)$, 
  the relative $L^1$-approximation error associated to $\U^{\Theta_m}(\xi)$, 
  the uncorrected sample standard deviation of the relative approximation error associated to $\U^{\Theta_m}(\xi)$, 
  the mean of the empirical loss function associated to $\U^{\Theta_m}(\xi)$, 
  the standard deviation of the empirical loss function associated to $\U^{\Theta_m}(\xi)$, 
  and the average runtime in seconds needed for calculating one realization of $\U^{\Theta_m}(\xi)$ 
against $m\in\{0,100,200,300,500\}$ based on $10$ realizations ($10$ independent runs)
in the case where for all $ x \in \R^d $, $ m \in \N_0 $, $ \omega \in \Omega $ 
it holds that 
\begin{equation}
\label{eq:specifications_GBM1}
\begin{split}
 d = 1,
 \qquad
 g(x) = \tfrac{1}{ 1 + \exp( - x^2 ) },
 \qquad
 \gamma_m = \tfrac{1}{100},
 \qquad
 \text{and}
 \qquad
 \xi(\omega) = -2.  
\end{split}
\end{equation}
In addition, Figure \ref{fig:figure_gbm1} depicts approximations of the relative $L^1$-approximation error 
associated to $\U^{\Theta_m}(\xi)$ and approximations of the mean of empirical loss function associated to $\Theta_m$ 
for $m\in \{0,1,\ldots,500\}$ based on $10$ independent realizations ($10$ independent runs)
in the case of \eqref{eq:specifications_GBM1}.
In the approximative calculations of the relative 
$ L^1 $-approximation errors in Table~\ref{tab:table_GBM100.tex}
and Figure~\ref{fig:figure_gbm100}
the value 
$ u( 0, - 2 ) $ of the solution $ u $ of the PDE 
(cf.\ \eqref{eq:example_gbm} and \eqref{eq:specifications_GBM1})
has been replaced by the value 
$ 0.90471 $ which, in turn, has been calculated 
by means of finite differences approximations 
(cf.\ {\sc Matlab} code \ref{code:finiteMatlab} 
in Subsection \ref{subsec:code_finiteMatlab} below).

\begin{appendix}
\section{Source codes}\label{sec:source_code}
  \subsection{A {\sc Python} code for the deep 2BSDE method used in Subsection~\ref{subsec:example_allen_cahn_plain_sgd_no_bn}}
\label{subsec:plainSGDCode}
The following {\sc Python} code, {\sc Python} code~\ref{code:deepPDEmethodPlainSGDNoBN} below, 
is a simplified version of {\sc Python} code~\ref{code:deepPDEmethod} in Subsection~\ref{subsec:generalCode} below.
\lstinputlisting[language=Python,
caption={A {\sc Python} code for the deep 2BSDE method used in Subsection~\ref{subsec:example_allen_cahn_plain_sgd_no_bn}. 
This {\sc Python} code uses the plain stochastic gradient descent method and does not use batch normalization.},
label=code:deepPDEmethodPlainSGDNoBN]{PlainAllenCahnSolver.py}
\subsection{A {\sc Matlab} code for the Branching diffusion 
method used in Subsection~\ref{subsec:example_allen_cahn_plain_sgd_no_bn}}
\label{subsec:BranchingMatlab}
The following {\sc Matlab} code is a slightly 
modified version of the {\sc Matlab} code 
in E, Han, \& Jentzen~\cite[Subsection~6.2]{EHanJentzen2017}.
\renewcommand{\lstlistingname}{{\sc Matlab} code}
\lstinputlisting[language=Matlab, 
caption={A {\sc Matlab} code for the Branching diffusion 
method used in Subsection~\ref{subsec:example_allen_cahn}.},
label=code:branchingMatlab]{BranchingMatlab.m}
\subsection{A {\sc Python} code for the deep 2BSDE 
method used in Subsection~\ref{subsec:example_bsb}}
\label{subsec:generalCode}
The following {\sc Python} code is based on the {\sc Python} code 
in E, Han, \& Jentzen~\cite[Subsection~6.1]{EHanJentzen2017}.
\renewcommand{\lstlistingname}{{\sc Python} code}
\lstinputlisting[language=Python,
caption={A {\sc Python} code for the deep 2BSDE 
method used in Subsection~\ref{subsec:example_bsb}.},
label=code:deepPDEmethod]{DeepLearningPDESolver_BSB.py}

\subsection{A {\sc Matlab} code for the classical Monte Carlo method 
used in Subsection~\ref{subsec:example_square_gradient}}
\label{subsec:MC_HJB}
The following {\sc Matlab} code is a slightly 
modified version of the {\sc Matlab} code 
in E, Han, \& Jentzen~\cite[Subsection~6.3]{EHanJentzen2017}.
\renewcommand{\lstlistingname}{{\sc Matlab} code}
\begin{lstlisting}[language=Matlab,  
caption=
{A {\sc Matlab} code for the classical Monte Carlo method 
used in Subsection~\ref{subsec:example_square_gradient}.}, 
label=code:MC_HJB
]
rng('default');
M = 10^7;
d = 100;
MC = 0;
for m=1:M
    dW = randn(1,d);
    MC = MC + 2/(1+norm(sqrt(2)*dW)^2);
end
MC = -log(MC/M);
\end{lstlisting}
\subsection{A {\sc Matlab} code for the finite differences method used in 
Subsection~\ref{subsec:example_gbm}}
\label{subsec:code_finiteMatlab}
The following {\sc Matlab} code is inspired by
the {\sc Matlab} code in E et al.~\cite[{\sc MATLAB} code 7 in Section~3]{MultilevelPicard}.
\lstinputlisting[language=Matlab,
caption={A {\sc Matlab} code for the finite differences method used in 
Subsection~\ref{subsec:example_gbm}.},
label=code:finiteMatlab]{GBrownianMotion.m}
\end{appendix}

\subsection*{Acknowledgements}
Sebastian Becker and Jiequn Han are gratefully acknowledged 
for their helpful and inspiring comments regarding 
the implementation of the deep 2BSDE method. 

\clearpage

\bibliographystyle{acm}
\bibliography{bibfile}

\end{document}